\newtheorem{proposition}{Proposition}
\newtheorem{corollary}{Corollary}
\newtheorem{definition}{Definition}
\DeclareMathOperator{\N}{\mathbb{N}}
\DeclareMathOperator{\coinv}{coinv}
\DeclareMathOperator{\rec}{rec}
\DeclareMathOperator{\Z}{\mathbb{Z}}
\DeclareMathOperator{\R}{\mathbb{R}}
\DeclareMathOperator{\Rel}{R}
\DeclareMathOperator{\PPP}{\mathbb{P}}
\title{Combinatorial interpretations of particular evaluations of complete and elementary symmetric functions\thanks{This paper is part of the author's Ph.D.\ thesis written under the direction of Prof. F. Brenti at the Univ. "la Sapienza" of Rome, Italy.}}
\author{Pietro Mongelli}
\date{}
\begin{document}
\setcounter{page}{1}
\pagenumbering{arabic}
\maketitle
\begin{center}
 e-mail address: mongelli@mat.uniroma1.it
\end{center}

{\bf Keywords:} Jacobi-Stirling numbers, Legendre-Stirling numbers, symmetric functions, combinatorial interpretations.

{\bf MSC 2010:} 05A19  (Primary); 05A05, 05A30, 11P81 (Secondary).
\\
{\bf Abstract} {\scriptsize 
The Jacobi-Stirling numbers and the Legendre-Stirling numbers of the first and second kind were first introduced in \cite{Everitt2002}, \cite{Everitt2007}. %as the coefficient of the integral composite powers of the Jacobi and Legendre differential operator. Quite remarkably, they share many similar properties with the classical Stirling numbers. 
In this paper we note that Jacobi-Stirling numbers and Legendre-Stirling numbers are specializations of elementary and complete symmetric functions. We then study combinatorial interpretations of this specialization and obtain new combinatorial  interpretations of the Jacobi-Stirling and Legendre-Stirling numbers.}
%\tableofcontents
\section{Introduction}
The aim of this paper is to give combinatorial interpretations of a family of numbers which includes the Legendre-Stirling numbers and Jacobi-Stirling numbers of both kinds.
The Jacobi-Stirling numbers were introduced  in \cite{Everitt2007} as the coefficients of the integral composite powers of the Jacobi differential operator
\begin{equation}
\label{E:JacobiOperator}
\mathfrak{l}_{\alpha,\beta}[y](t)=\frac{1}{(1-t)^{\alpha}(1+t)^\beta}\big(-(1-t)^{\alpha+1}(1+t)^{\beta+1}y'(t)\big)',
\end{equation}
with fixed real parameters $\alpha,\beta>-1$.
When the parameters are both equal to $0$, we find the definition of the Legendre-Stirling numbers, first introduced in \cite{Everitt2002} and later studied in \cite{Andrews2010}. In \cite{Everitt2002}, \cite{Everitt2007} and \cite{Mongelli2011} the authors show that both numbers share many properties with the classical Stirling numbers of both kinds such as similar recurrence relations, generating functions and total positivity properties. Recently several combinatorial interpretations of the  Legendre-Stirling numbers (\cite{Andrews2009},\cite{Egge2010}) and the Jacobi-Stirling numbers (\cite{Gelineau2010}) have been given, most of which are based on permutations and partitions, exactly the same combinatorial objects used for the classical interpretations of the Stirling numbers.

In this paper we note that the Jacobi-Stirling numbers and the Legendre-Stirling numbers of both kinds  are specializations of elementary and complete homogeneous symmetric functions. We study these specializations and then apply the results to Jacobi-Stirling and Legendre-Stirling numbers. 
More precisely, we give  general combinatorial interpretations of these specializations with a unified approach. These combinatorial interpretations include new combinatorial interpretations for the Jacobi-Stirling and Legendre-Stirling numbers as well as the results in \cite[Theorem 2]{Andrews2009}, \cite[Theorem 2.5]{Egge2010} and \cite[Theorem 7]{Gelineau2010}.

The organization of the paper is as follows.
In Section \ref{s:not} we recall the symmetric functions and some of their  properties and  show how we can obtain Jacobi-Stirling numbers and Legendre-Stirling numbers by a suitable evaluation of them. In Section \ref{S:Int1} we define a $q$-analogue of these numbers and we give a combinatorial interpretations of them that generalizes interpretations given in \cite{Andrews2009}, \cite{Egge2010}, \cite{Gelineau2010} when $q=1$ and that we can use in particular for the Jacobi-Stirling numbers. 
The evaluation of the symmetric functions that we use are parameterized by real nonnegative numbers. As done in \cite{Gelineau2010} for the Jacobi-Stirling numbers, in section \ref{S:Int2} we study our evaluations of the symmetric functions as polynomials in these parameters. % In particular we give combinatorial interpretations of the coefficient of the polynomials obtained by the evaluation of the complete and elementary symmetric functions. 
In Section \ref{s:Dev} we turn our attention to other functions, already introduced in \cite{Brenti1995}, that  generalized the complete and elementary symmetric functions. We give a combinatorial interpretation of their evaluations and if we apply it to the case of Jacobi-Stirling numbers we get a new result. Moreover in this section we study the evaluations of the well known monomial symmetric functions.
Finally, in Section \ref{S:Rem} we recall other properties of the elementary and symmetric functions such that if applied to the Jacobi-Stirling numbers and Legendre-Stirling numbers gives us immediately some properties studied in \cite{Andrews2010}, \cite{Everitt2002} and \cite{Everitt2007}.

\section{Definitions, notation and preliminaries}
\label{s:not}
We let $\PPP:=\{1,2,3,...\}$, $\N:=\PPP\cup\{0\}$, $\Z=\N\cup\{-1,-2,-3,\dots\}$. The cardinality of a set $A$ will be denoted by $\vert A\vert$.

For the following definitions we use the notations of \cite[Chapter I.2]{Macdonald1979}. Consider the ring $\Z[x_1,\dots,x_n]$ of polynomials in $n$ independent variables $x_1,\dots, x_n$ with integer coefficients. For each $r\ge 0$ the $r$-th \emph{elementary symmetric function} $e_r$ is the sum of all products of $r$ distinct variables $x_i$, so that $e_0=1$ and for $r\ge1$
\begin{equation}
\label{E:ClassicalElemSymm}
e_r(x_1,\dots,x_n)=\sum_{i_1<i_2<\cdots<i_r}x_{i_1}x_{i_2}\cdots x_{i_r}.
\end{equation}
The $r$-th \emph{complete symmetric function} $h_r$ is the sum of all monomials of total degree $r$ in the variables $x_1,\dots,x_n$ so that $h_0=1$ and for $r\ge 1$
\begin{equation}
\label{E:ClassicalComplSymm}
h_r(x_1,\dots,x_n)=\sum_{i_1\le i_2\le \cdots\le i_r} x_{i_1}x_{i_2}\cdots x_{i_r}.
\end{equation}

It is a simple exercise to check the following recursion formulas for $n,j\ge 1$, $n> j$:
\begin{align}
h_{n-j}(x_1,\dots,x_j)=&h_{n-j}(x_1,\dots,x_{j-1})+x_j h_{n-j-1}(x_1,\dots,x_{j}); \label{E:Rich}\\
e_{n-j}(x_1,\dots,x_{n-1})=&e_{n-j}(x_1,\dots,x_{n-2})+x_{n-1}e_{n-j-1}(x_1,\dots,x_{n-2})\label{E:Rice}
\end{align}
with initial conditions
\begin{align}
h_0(x_1,\dots,x_j)&=1;&& h_{n-j}(x_1,\dots,x_j)=0 &\text{(if $n<j$)},\label{E:Inih}\\
e_0(x_1,\dots,x_{n-1})&=1; && e_{n-j}(x_{1},\dots,x_{n-1})=0 &\text{(if $n<j$)}.\label{E:Inie}
\end{align}

We now recall the definitions of the Jacobi-Stirling numbers and the Legendre-Stirling numbers and some of their basic properties.

The Jacobi-Stirling numbers of the second kind $JS_n^{(j)}(z)$ are defined for all $n,j\in\N$ by Everitt et al.\ via the following expansion of the $n$-th composite power of $\mathfrak{l}_{\alpha,\beta}$ (see \cite[Theorem 4.2]{Everitt2007}):
$$
(1-t)^{\alpha}(1+t)^{\beta}\mathfrak{l}_{\alpha,\beta}^{n}[y](t)=\sum_{j=0}^n(-1)^j\big(JS_n^{(j)}(\alpha+\beta+1)(1-t)^{\alpha+j}(1+t)^{\beta+j}y^{(j)}(t)\big)^{(k)},
$$
where $\mathfrak{l}_{\alpha,\beta}$ is the Jacobi differential operator (\ref{E:JacobiOperator}) and $\alpha,\beta>-1$ are fixed real parameters. Since these numbers depend only on the sum $\alpha+\beta$, we set $z=\alpha+\beta+1>-1$. %àThe Jacobi-Stirling numbers of the first kind $Jc_n^{j}(z)$ are definig

In \cite[Section 4]{Everitt2007} or in \cite[Section 1]{Gelineau2010} the following recursive formula are given
\begin{align}
\label{E:RecJacStir2}
JS_n^{(j)}(z)&=JS_{n-1}^{(j-1)}(z)+j(j+z) JS_{n-1}^{(j)}(z)
\end{align}
with initial conditions
\begin{align}
\label{E:RecJacStir2ini}
JS_n^{(0)}(z)&=JS_{0}^{(j)}(z)=0;\quad JS_0^{(0)}(z)=1.
\end{align}

From relations (\ref{E:RecJacStir2}), (\ref{E:RecJacStir2ini})  can be easily deduced  \cite[Theorem 4.1]{Everitt2007}.

Moreover, in \cite{Everitt2007} the following equation is given:
\begin{equation}
\label{E:GfJS}
x^n=\sum_{j=0}^n JS_n^{(j)}\prod_{i=0}^{j-1}(x-i(z+i))
\end{equation}
and in the same paper the authors define the (unsigned) Jacobi-Stirling numbers of the first kind for all $n,j\in\N$ as follows
\begin{equation}
\label{E:GfJS1}
\prod_{i=0}^{n-1}(x-i(z+i)) =\sum_{j=0}^n (-1)^j Jc_n^{(j)}x^j.
\end{equation}

The Jacobi-Stirling numbers of the first kind satisfy the following recursion
\begin{align}
\label{E:RecJacStir1}
Jc_n^{(j)}(z)&=Jc_{n-1}^{(j-1)}(z)+(n-1)(n-1+z) Jc_{n-1}^{(j)}(z)
\end{align}
with initial conditions
\begin{align}
\label{E:RecJacStir1ini}
Jc_n^{(0)}(z)&=Jc_{0}^{(j)}(z)=0;\quad Jc_0^{(0)}(z)=1.
\end{align}

It is simple to check that recursion (\ref{E:Rich}) is the same as (\ref{E:RecJacStir2}) if $x_j=j(j+z)$ for all $j\ge 1$ and that recursion (\ref{E:Rice}) is the same as (\ref{E:RecJacStir1}) if $x_{n-1}=(n-1)(n-1+z)$ for all $n\ge 2$. 

The Legendre-Stirling numbers were first introduced  in \cite{Everitt2002} as the coefficients of the integral powers of the second-order Legendre differential operator $\mathfrak{l}[\cdot]$  (\ref{E:JacobiOperator}) when $\alpha=\beta=0$. Therefore these numbers coincide with the Jacobi-Stirling numbers when $z=1$.
As noted in \cite{Everitt2002} and \cite{Everitt2007}, both numbers have properties similar to the classical Stirling numbers of both kinds. This is  because the Stirling numbers also satisfy the same recursion formulas (\ref{E:Rich}) and (\ref{E:Rice}) with $x_j=j$ for all $j\ge 1$. For these reasons we define the following objects. 

Fix an integer  $r\ge 1$ and fix $a_1,\dots,a_r$ nonnegative real numbers. Let $f(x)=(x+a_1)(x+a_2)\cdots(x+{a_r})$. We then define for all $n,j\in\N$
\begin{align}
H^f_{j,n}&=h_{n-j}(f(1),f(2),\dots,f(j));\label{D:Hf}\\
E^f_{j,n}&=e_{n-j}(f(1),f(2),\dots,f(n-1)).\label{D:Ef}
\end{align}

By the above remarks, and  denoting with $S(n,j)$, $c(n,j)$ the (unsigned) Stirling numbers of the second and first kind, and with $LS(n,j)$, $Lc(n,j)$ the (unsigned) Legendre-Stirling numbers of the second and first kind, it is easy to check that
\begin{align}
S(n,j)&=H_{j,n}^{x};& c(n,j)&=E_{j,n}^{x} \label{E:Stir};\\
LS_n^{(j)}&=H_{j,n}^{x(x+1)};& Lc_n^{(j)}&=E_{j,n}^{x(x+1)}\label{E:LS};\\
JS_n^{(j)}(z)&=H_{j,n}^{x(x+z)};&Jc_n^{(j)}(z)&=E_{j,n}^{x(x+z)}.\label{E:JS}
\end{align}
%where $fS$ is the polynomial $fS(x)=x$, $fLS$ is the polynomial $fLS(x)=x(x+1)$ and for all $z>-1$, $f_z$ is the polynomial $f_z(x)=x(x+z)$. 

When the polynomial $f$ is in $\N[x]$ it is possible to define a $q$-analogue of the numbers $H_{j,n}^{f}$ and $E_{j,n}^{f}$.

Given a positive integer $n$, we denote by $[n]$ the polynomial $1+q+\cdots+ q^{n-1}$; moreover, we set $[0]:=0$. 

\begin{definition}
\label{D:qanal}
Let $f(x)=(x+a_1)\cdots (x+a_r)$, with $r\in\PPP$ and $a_1,\dots,a_r\in\N$.
For all $j,n\ge1$, we define the polynomials $H_{j,n}^{f}(q)$ by the recursive formula
\begin{align}
H_{j,n}^{f}(q)=&H_{j-1,n-1}^{f}(q)+[j+a_1][j+a_2]\cdots[j+a_r]H_{j,n-1}^{f}(q);\label{A:qH}
\end{align}
with initial conditions
\begin{align}
H_{0,n}^{f}(q)=&H_{j,0}^{f}(q)=0; \quad H_{0,0}^{f}(q)=1.\notag
\end{align}
For all $j,n\ge 1$, we define the polynomials $E_{j,n}^{f}(q)$ by the recursive formula
\begin{align}
E_{j,n}^{f}(q)=&E_{j-1,n-1}^{f}(q)+[n-1+a_1][n-1+a_2]\cdots[n-1+a_r]E_{j,n-1}^{f}(q);\label{A:qE}
\end{align}
with initial conditions
\begin{align}
E_{0,n}^{f}(q)=&E_{j,0}^{f}(q)=0; \quad E_{0,0}^{f}(q)=1.\notag
\end{align}
\end{definition}

In the case of the Stirling numbers we recognize well-known $q$-analogue (see e.\ g.\ \cite{Carlitz1933}, \cite{Gould1961}, \cite{Leroux1990}).

In the following, for all $j,n,k\in\N$ we denote by $H_{j,n}^f(k)$ and $E_{j,n}^f(k)$  the coefficient of $q^k$ in $H_{j,n}^f(q)$ and $E_{j,n}^f(q)$, respectively.

In the next section we give a combinatorial interpretation for $H_{j,n}^f[k]$ and $E_{j,n}^f[k]$; in particular we obtain a combinatorial interpretation of $H_{j,n}^f$ and $E_{j,n}^f$. This combinatorial interpretation is new even for $f=x(x+z)$ (i.\ e.,  for the Jacobi-Stirling numbers).
%In Section \ref{S:Int1} we introduce a $q$ analogue of the numbers $H_{j,n}^{f}$ and $E_{j,n}^{f}$ when $f$ is a polynomial with coefficients in $\N$. Then we give a combinatorial interpretation of the coefficients of the polynomials in $q$. When $q=1$ (i.\ e.\ when we study only the original numers), in the case of (signless) Legendre-Stirling numbers of the first kind our interpretation is the same of one give in \cite{Egge2010}; in the case of the Legendre-Stirling numbers of the second kind our interpretation is similar (but not the same) of which in \cite{Andrews2009}.
%In Section \ref{S:Int2} we give  combinatorial interpretations of the coefficients of $H_{j,n}^{f}$ and $E_{j,n}^{f}$ when we look at this numbers as polynomials in the indeterminates $a_1,\dots,a_r$ if $f(x)=(x+a_1)\cdots(x+a_r)$. In the case of Jacobi-Stirling numbers, these are similar to  which in \cite{Gelineau2010}.
%Finally, in Section \ref{S:Rem} we list some properties already known of symmetric functions that can be applied to Legendre-Stirling numbers and Jacobi-Stirling numbers.
In Section \ref{S:Int2} we consider $H_{j,n}^f$ and $E_{j,n}^f$ as polynomials in the indeterminates $a_1,\dots, a_r$ where $f(x)=(x+a_1)\cdots (x+a_r)$ and we give other combinatorial interpretations.

In Section \ref{s:Dev} we study the evaluations of the following functions which generalize the elementary and complete symmetric functions.

In \cite[Section 7]{Gessel1989} Gessel and Viennot introduce the generalized Schur functions, defined on a set $X=\{x_1,\dots, x_n\}$ of indeterminates, with an order relation $\Rel$, in the following way
\begin{equation}
\label{E:GenSchurFun}
h_k^{\Rel}(x_1,\dots,x_n):=\sum_{i_1,\dots,i_k} x_{i_1}x_{i_2}	\cdots x_{i_k}
\end{equation}
where the sum is over all indices such that $i_1 \Rel i_2 \Rel \cdots \Rel i_k$. If $k=0$ then $h_0^{\Rel}=0$.
If we consider the classical order relations $<$ or $\le$ we get the elementary or complete symmetric functions. In \cite[Section 5]{Brenti1995} the author turns his attention to the relation $\Rel_t$ defined as follows: for all $t\in\N$ and for all $i,j\in\N$, we set $x_i\Rel_t x_j$ if and only if $j-i\ge t$.
Another generalization of the elementary symmetric functions is given in \cite[Section 5]{Brenti1995} as follows: for all $t,n,k\in\PPP$ we set
\begin{equation}
\label{E:GenaFun}
a_k^{(t)}(x_1,\dots,x_n):=\sum_{i_1,\dots,i_k} x_{i_1}x_{i_2}	\cdots x_{i_k}
\end{equation}
where the sum is over all $1\le i_1<\cdots <i_k\le n$ such that $i_j\equiv j$ (mod $t$) for all $j=1,\dots,k$. For example, $a_2^{(2)}(x_1,\dots,x_6)=x_1x_2+x_1x_4+x_1x_6+x_3x_4+x_3x_6+x_5x_6$.

In Section \ref{s:Dev} we study the evaluations of the above polynomials and we have combinatorial interpretations of their coefficients.

\section{Combinatorial interpretations of $H_{j,n}^{f}[k]$ and $E_{j,n}^{f}[k]$}
\label{S:Int1}
In this section we give a combinatorial interpretation of the coefficients $H_{j,n}^f[k]$ and $E_{j,n}^f[k]$ for any fixed polynomial with only integer roots and nonnegative coefficients $f=(x+a_1)(x+a_2)\cdots(x+a_r)$, $a_1\le a_2\le\cdots \le a_r$, and for fixed $j,n,k\in\N$. % In section \ref{S:Int2} we give a combinatorial interpretation for the coefficients of  $H_{j,n}^f(q)$ and $E_{j,n}^f(q)$ as polynomials in $a_1,\dots,a_r$.

\subsection{Combinatorial interpretation of $H_{j,n}^{f}[k]$.}
\label{SS:Int1H}

Fix $j,n,k\in \N$ and consider $r$ labeled copies of the numbers $1,\dots,n$, i.\ e.\
\begin{equation}
\label{E:copiesnumbers}
1_1,1_2,\dots,1_r,2_1,2_2,\dots,2_r,\dots\dots,n_1,n_2,\dots,n_r.
\end{equation}
We consider a pair $P=(\pi,(S_1,\dots, S_{a_r}))$ where $\pi$ is a set partition of a subset of $\{1_1,\dots, n_{r}\}$ into $j$ blocks and $S_1,\dots, S_{a_r}$ are subsets of $\{1_1,\dots, n_r\}$. We say that $P$ is  {\bf $f$-Stirling} of order $(n,j)$ if  $P$ is  a partition of (\ref{E:copiesnumbers}) into $j+a_r$ subsets such that
\begin{itemize}
	\item the subsets in $\pi$ are nonempty  and each one contains the minimum number with all its indices;
	\item one of the subsets in $\pi$  contains $1_1,1_2,\dots,1_r$;
	\item each  $m_i$, ($1\le m\le n$, $1\le i\le r$) is in one of the first $j+a_i$ subsets.  
\end{itemize}

For example, an $f$-Stirling partition of order $(3,2)$, with $f=(x+1)(x+2)$, is $(\{1_1,1_2,3_1\},\{2_1,2_2\},\{\},\{3_2\})$. The partitions $P_1=(\{1_1,1_2,2_1\},\{2_2,3_1,3_2\},\{\},\{\})$, $P_2=(\{2_1,2_2\},\{3_1,3_2\},\{1_1,1_2\},\{\}) $ and $P_3=(\{1_1,1_2,3_2\},\{2_1,2_2\},\{\},\{3_1\})$ are not $f$-Stirling since one of the previous conditions fails. 

Now, given an $f$-Stirling partition $P=(\pi,(S_1,\dots, S_{a_r}))$ of order $(n,j)$, we label each subset in $P$ in the following way: each one of the subsets in $\pi$ is labeled by the minimum number that it contains; each $S_i$ is labeled by $1-i$ for all $i=1,\dots,a_r$. For example, all sets in $(\{1_1,1_2,3_1\},\{2_1,2_2\},\{\},\{3_2\})$ have labels $(1,2,0,-1)$. With these labels, we can define a total order relation between the subsets of $P$. Moreover we  say that a subset is greater or less than a number via its label. By using these order relations we can define the following numbers.

Let $i_j$ ($i\le n$, $j\le r$) be a labeled number in (\ref{E:copiesnumbers}) and let $A_{i_j}$ be the unique subset in $P$ containing $i_j$. We set
$$
s_{i_j}:=\big\vert\{A \text{ subset in } P\vert A_{i_j}<A<i  \}\big\vert
$$
and 
$$
s_P:=\sum_{1\le i\le n, 1\le j\le r}s_{i_j}.
$$
We define $s_P:=0$ when $n=0$. For example, if $P=(\{1_1,1_2,3_1\},\{2_1,2_2\},\{\},\{3_2\})$ then $s_P=4$.

We then have the following result
\begin{proposition}
\label{P:IntGS}
For all $n,j,k\in\N$ we have that $H_{j,n}^{f}[k]$ is the number of $f$-Stirling partitions $P$ of order $(n,j)$ such that $s_P=k$.
\end{proposition}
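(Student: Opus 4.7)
The plan is to prove the result by induction on $n$, showing that the generating polynomial
$$\tilde H_{j,n}^f(q):=\sum_{P}q^{s_P},$$
summed over all $f$-Stirling partitions $P$ of order $(n,j)$, satisfies the same recursion and initial conditions as $H_{j,n}^f(q)$ in Definition \ref{D:qanal}. The base cases are immediate: when $n=j=0$ the only candidate $P$ has all blocks empty and $s_P=0$, giving $\tilde H_{0,0}^f(q)=1$, while for $n>0,\,j=0$ no block can contain $1_1,\dots,1_r$, so $\tilde H_{0,n}^f(q)=0$, and for $n=0,\,j>0$ there are not enough elements to fill $j$ nonempty blocks, so $\tilde H_{j,0}^f(q)=0$.

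For the inductive step, I will partition $f$-Stirling partitions of order $(n,j)$ according to whether $n$ is the minimum of some block of $\pi$. In the first case, the block must be exactly $\{n_1,\dots,n_r\}$ (since every block contains its minimum with all indices). Deleting this block gives an $f$-Stirling partition $P'$ of order $(n-1,j-1)$, and this correspondence is a bijection. The only new subset has label $n$, which is strictly larger than every other label; therefore $s_{m_k}$ is unchanged for every $m<n$, and for each $n_i$ we have $A_{n_i}$ of label $n$ so no subset $A$ satisfies $A_{n_i}<A<n$, giving $s_{n_i}=0$. Hence $s_P=s_{P'}$, and the contribution of this case is $\tilde H_{j-1,n-1}^f(q)$.

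In the second case, no block of $\pi$ has minimum $n$, so each $n_i$ is inserted into a previously existing subset. Removing the $n_i$'s yields an $f$-Stirling partition $P'$ of order $(n-1,j)$ with the same labels on its subsets. For every $m<n$ and every index $k$, the set $A_{m_k}$ is unchanged and the collection of subsets is the same, so $s_{m_k}(P)=s_{m_k}(P')$. The only new contribution to $s_P-s_{P'}$ comes from the $n_i$'s: by hypothesis, $n_i$ can be placed in any of the $j+a_i$ subsets of largest label; if it is placed in the $p$-th such subset from the top ($1\le p\le j+a_i$), then exactly $p-1$ subsets $A$ satisfy $A_{n_i}<A<n$, so $s_{n_i}=p-1$. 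Since the placements of $n_1,\dots,n_r$ are independent and $a_1\le\cdots\le a_r$ (so these ranges are well-defined), summing over all choices multiplies the weight of $P'$ by $\prod_{i=1}^r\sum_{p=1}^{j+a_i}q^{p-1}=\prod_{i=1}^r[j+a_i]$. The contribution of this case is thus $[j+a_1]\cdots[j+a_r]\,\tilde H_{j,n-1}^f(q)$.

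Adding the two contributions reproduces exactly the recursion (\ref{A:qH}), completing the induction. The main delicate point — and the step where I would be most careful — is verifying that the statistic $s_P$ behaves correctly under insertion/deletion: specifically, that inserting $n_i$ into the $p$-th subset from the top (among the admissible ones) contributes $q^{p-1}$ rather than some shifted power. This relies on the fact that the label of every subset in $P'$ is strictly less than $n$ (since $n$ is not a minimum), so the condition $A<n$ is automatic and the count of subsets $A$ with $A_{n_i}<A<n$ reduces to the count of subsets strictly above $A_{n_i}$ in the total order.
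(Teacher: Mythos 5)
Your proof is correct and follows essentially the same route as the paper's: induction on $n$ with the same case split (either $\{n_1,\dots,n_r\}$ forms a new block, giving the $(n-1,j-1)$ term, or each $n_i$ is distributed among the top $j+a_i$ subsets, contributing $[j+a_1]\cdots[j+a_r]$). Working with the generating polynomial $\sum_P q^{s_P}$ rather than coefficientwise, and spelling out why $s_{n_i}=p-1$, are only presentational refinements of the paper's argument.
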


\begin{proof}
We argue by induction on $n$. We let $p_{j,n}^{f}(k)$ be the number of $f$-Stirling partitions $P$ of order $(n,j)$ with $s_P=k$.
If $n=0$ we then  have to put no numbers in $j+a_r$ subsets such that the first $j$ subset are nonempty: this is possible in exactly 1 way only if $j=0$; indeed $H_{0,0}^{f}(q)=1$. If $j=0$ and $n>1$ we can't put $1_1,\dots,1_r$ in one of the first $0$ subsets, so there aren't any $f$-Stirling partitions of order $(n,0)$. Therefore $p_{j,n}^{f}(k)=H_{j,n}^{f}[k]$ if $n=0$ or $j=0$.

Now,  suppose $j,n\ge 1$ and the claim true for smaller values. We can obtain an $f$-Stirling partition $P$ either by adding a subset $\{n_1,\dots,n_k\}$ to an $f$-Stirling partition $P_1$ of order $(n-1,j-1)$, or by adding each number $n_i$ with $1\le i\le r$ to one of the first $j+a_i$ subset of an $f$-Stirling partition $P_2$ of order $(n-1,j)$. In the first case we have only one possibility and, $A_{n_i}$ has label $n$ for all $i=1,\dots,r$, so $s_P$=$s_{P_1}$. In the second case we have more possibilities. Fix an index $i\le r$, then we can put $n_i$ in $j+a_i$ subsets and so $s_{n_i}$ can assume all values $0,\dots,j+a_i-1$ (we can have $A<n$ for any $A$ subset of $P_2$). Therefore, we can construct $P$ with $s_P=k$ in 
\begin{equation}
\label{E:relp}
p_{j,n}^{f}(k)=p_{j-1,n-1}^{f}(k)+\sum_{h_1=0}^{j-1+a_1}\cdots\sum_{h_r=0}^{j-1+a_r}p_{j,n-1}^{f}\big(k-(h_1+\cdots +h_r)\big)
\end{equation}
different ways.
We know by definition (\ref{A:qH}), that 
$$
H_{j,n}^{f}(q)=H_{j-1,n-1}^{f}(q)+\sum_{h_1=0}^{j-1+a_1}\cdots\sum_{h_r=0}^{j-1+a_r}q^{h_1+\cdots +h_r}H_{j,n-1}^{f}(q).
$$
Therefore, the coefficient of $q^k$ is
\begin{equation}
\label{E:relHk}
H_{j,n}^{f}[k]=H_{j-1,n-1}^{f}[k]+\sum_{h_1=0}^{j-1+a_1}\cdots\sum_{h_r=0}^{j-1+a_r}H_{j,n-1}^{f}\big[k-(h_1+\cdots +h_r)\big].
\end{equation}
By comparing (\ref{E:relp}) and (\ref{E:relHk}) the thesis follows.
\end{proof}

For example here are all $f$-Stirling partitions of order $(3,2)$ when  $f=(x+1)(x+2)$: 
\begin{align*}
\text{label} && 1 	&& 2  	&& 0 						&& -1 &&\\ 
P_1: &&\{1_1,1_2,3_1,3_2\} && \{2_1,2_2\} && \{\} && \{\} && s_{P_1}=2\\
P_2: &&\{1_1,1_2,3_1\} && \{2_1,2_2,3_2\} && \{\} && \{\} && s_{P_2}=1\\
P_3: &&\{1_1,1_2,3_1\} && \{2_1,2_2\} && \{3_2\} && \{\} && s_{P_3}=3\\
P_4: &&\{1_1,1_2,3_1\} && \{2_1,2_2\} && \{\} && \{3_2\} && s_{P_4}=4\\
P_5: &&\{1_1,1_2,3_2\} && \{2_1,2_2,3_1\} && \{\} && \{\} && s_{P_5}=1\\
P_6: &&\{1_1,1_2\} && \{2_1,2_2,3_1,3_2\} && \{\} && \{\} && s_{P_6}=0\\
P_7: &&\{1_1,1_2\} && \{2_1,2_2,3_1\} && \{3_2\} && \{\} && s_{P_7}=2\\
P_8: &&\{1_1,1_2\} && \{2_1,2_2,3_1\} && \{\} && \{3_2\} && s_{P_8}=3\\
P_9: &&\{1_1,1_2,3_2\} && \{2_1,2_2\} && \{3_1\} && \{\} && s_{P_9}=3\\
P_{10}: &&\{1_1,1_2\} && \{2_1,2_2,3_2\} && \{3_1\} && \{\} && s_{P_{10}}=2\\
P_{11}: &&\{1_1,1_2\} && \{2_1,2_2\} && \{3_1,3_2\} && \{\} && s_{P_{11}}=4\\
P_{12}: &&\{1_1,1_2\} && \{2_1,2_2\} && \{3_1\} && \{3_2\} && s_{P_{12}}=5\\
\text{label} && 1 	&& 3  	&& 0 						&& -1 &&\\ 
P_{13}: &&\{1_1,1_2,2_1,2_2\} && \{3_1,3_2\} && \{\} && \{\} && s_{P_{13}}=0\\
P_{14}: &&\{1_1,1_2,2_1\} && \{3_1,3_2\} && \{2_2\} && \{\} && s_{P_{14}}=1\\
P_{15}: &&\{1_1,1_2,2_1\} && \{3_1,3_2\} && \{\} && \{2_2\} && s_{P_{15}}=2\\
P_{16}: &&\{1_1,1_2,2_2\} && \{3_1,3_2\} && \{2_1\} && \{\} && s_{P_{16}}=1\\
P_{17}: &&\{1_1,1_2\} && \{3_1,3_2\} && \{2_1,2_2\} && \{\} && s_{P_{17}}=2\\
P_{18}: &&\{1_1,1_2\} && \{3_1,3_2\} && \{2_1\} && \{2_2\} && s_{P_{18}}=3\\
\end{align*}

Indeed $H_{2,3}^{(x+1)(x+2)}(q)=2+4q+5q^2+4q^3+2q^4+q^5$.
When $r=1$ and $a_1=0$, with the same arguments of the proof of Proposition \ref{P:IntGS}, we have the following new combinatorial interpretation of $q$-Stirling numbers of the second kind.
\begin{corollary}
\label{C:Stirling2}
For all $j,n,k\in\N$, $S(n,j)[k]$ is the number of  partitions $P$ of $\{1,\dots,n\}$ in $j$ nonempty blocks with $s_P=k$.
\end{corollary}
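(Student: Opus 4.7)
The plan is to deduce Corollary \ref{C:Stirling2} directly from Proposition \ref{P:IntGS} by specializing the polynomial $f$. By the identification $S(n,j) = H_{j,n}^{x}$ from equation (\ref{E:Stir}), and by Definition \ref{D:qanal} with $r=1$ and $a_1 = 0$, the $q$-analogue $S(n,j)(q) = H_{j,n}^{x}(q)$ satisfies
\[
H_{j,n}^{x}(q) = H_{j-1,n-1}^{x}(q) + [j]\,H_{j,n-1}^{x}(q),
\]
which is the familiar Carlitz $q$-Stirling recursion. So the arithmetic is already handled by Proposition \ref{P:IntGS}; the only task is to check that the combinatorial objects simplify to ordinary set partitions.

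First, I would observe that when $r=1$ and $a_1=0$, we have $a_r = 0$, so the tuple $(S_1,\dots,S_{a_r})$ is empty, and an $f$-Stirling partition of order $(n,j)$ is simply a set partition $\pi$ of a single copy $\{1_1,\dots,n_1\}$ of $\{1,\dots,n\}$ into $j$ nonempty blocks. The condition that each block contains the minimum of its elements (together with all its indices) is vacuous since $r=1$, and the condition that each $m_i$ lies in one of the first $j + a_i = j$ blocks simply says that the $j$ blocks of $\pi$ are among the $j$ labeled blocks, which is automatic. Identifying $m_1$ with $m$, we recover exactly the set partitions of $\{1,\dots,n\}$ into $j$ nonempty blocks.

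Next I would verify that the statistic $s_P$ specializes correctly. Label each block of $\pi$ by its minimum; this induces the total order on blocks used in the general construction. For $m \in \{1,\dots,n\}$, letting $A_m$ be the block containing $m$, the definition gives
\[
s_m = \bigl|\{A \text{ block of } \pi \;:\; A_m < A < m\}\bigr|,
\]
i.e.\ the number of blocks whose minimum lies strictly between $\min A_m$ and $m$. Then $s_P = \sum_{m=1}^{n} s_m$ is the classical major-index-type statistic on set partitions that is known to generate the $q$-Stirling numbers.

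The conclusion then follows immediately by applying Proposition \ref{P:IntGS} with $f(x)=x$: the coefficient $H_{j,n}^{x}[k] = S(n,j)[k]$ counts exactly the $f$-Stirling partitions of order $(n,j)$ with $s_P = k$, which under the above dictionary are the set partitions of $\{1,\dots,n\}$ into $j$ nonempty blocks with $s_P = k$. There is no genuine obstacle here; the only care needed is the bookkeeping that the auxiliary subsets $S_i$ disappear and that the block-comparison statistic reduces to the familiar one on ordinary partitions.
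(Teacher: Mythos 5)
Your proposal is correct and follows essentially the same route as the paper: the paper simply remarks that the corollary is obtained ``with the same arguments of the proof of Proposition~\ref{P:IntGS}'' in the case $r=1$, $a_1=0$, which is exactly your specialization $f(x)=x$ together with the observation that the auxiliary subsets $S_1,\dots,S_{a_r}$ vanish and the remaining conditions on $f$-Stirling partitions become vacuous. The dictionary you set up between $f$-Stirling partitions of order $(n,j)$ and ordinary set partitions of $\{1,\dots,n\}$ into $j$ nonempty blocks, with $s_P$ reducing to the block-minimum comparison statistic, is precisely what is needed.
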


When we evaluate all polynomials in $q=1$ we get
\begin{corollary}
\label{C:interH}
For all $j,n\in\N$, $H_{j,n}^{f}$ is the number of $f$-Stirling partitions of order $(n,j)$.
\end{corollary}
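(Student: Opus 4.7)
The plan is to deduce this directly from Proposition \ref{P:IntGS} by summing over the statistic $s_P$, after first verifying that the $q$-analogue $H_{j,n}^f(q)$ specializes at $q=1$ to the original $H_{j,n}^f$ defined in (\ref{D:Hf}).

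First, I would check the specialization. Since $[m]$ becomes $m$ at $q=1$, the product $[j+a_1][j+a_2]\cdots[j+a_r]$ evaluated at $q=1$ equals $(j+a_1)(j+a_2)\cdots(j+a_r) = f(j)$. Hence the recursion (\ref{A:qH}) defining $H_{j,n}^f(q)$ specializes to
\begin{equation*}
H_{j,n}^f(1) = H_{j-1,n-1}^f(1) + f(j)\, H_{j,n-1}^f(1),
\end{equation*}
which is precisely the recursion (\ref{E:Rich}) satisfied by $h_{n-j}(f(1),\dots,f(j))$, under the identification $x_i = f(i)$. The boundary conditions match those in (\ref{E:Inih}) as well, so by a straightforward induction $H_{j,n}^f(1) = h_{n-j}(f(1),\dots,f(j)) = H_{j,n}^f$.

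Next, since $H_{j,n}^f(q) = \sum_k H_{j,n}^f[k]\, q^k$, setting $q=1$ gives
\begin{equation*}
H_{j,n}^f = H_{j,n}^f(1) = \sum_{k\ge 0} H_{j,n}^f[k].
\end{equation*}
Proposition \ref{P:IntGS} identifies each summand $H_{j,n}^f[k]$ with the number of $f$-Stirling partitions $P$ of order $(n,j)$ having $s_P = k$, so summing over $k$ counts all $f$-Stirling partitions of order $(n,j)$ without restriction on $s_P$, yielding the claim.

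There is essentially no obstacle here: the result is a one-line corollary of Proposition \ref{P:IntGS}. The only minor verification needed, which I would make explicit, is the $q=1$ specialization identifying the recursive $H_{j,n}^f(q)$ with the symmetric-function definition $H_{j,n}^f$ — and this is guaranteed by comparing the recursions (\ref{A:qH}) and (\ref{E:Rich}) together with their initial conditions.
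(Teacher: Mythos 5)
Your proof is correct and follows the same route as the paper: the corollary is obtained by setting $q=1$ in Proposition \ref{P:IntGS} and summing the coefficients $H_{j,n}^{f}[k]$ over $k$. The only addition is that you make explicit the (easy but worthwhile) check that the recursion (\ref{A:qH}) at $q=1$ coincides with (\ref{E:Rich}) under $x_i=f(i)$, so that $H_{j,n}^{f}(1)=H_{j,n}^{f}$; the paper leaves this implicit.
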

In particular, when $r=2$, $a_1=0$ and $a_2=1$ (i.\ e.\ for the Legendre-Stirling numbers), we obtain The following result.
\begin{corollary}
For all $n,j,k\in\N$ we have that $LS_n^{(j)}[k]$ is the number of $x(x+1)$-Stirling partitions $P$ of order $(n,j)$ such that $S_P=k$.
\end{corollary}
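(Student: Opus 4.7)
The plan is to observe that this corollary is a direct specialization of Proposition \ref{P:IntGS} to the polynomial $f(x)=x(x+1)=(x+0)(x+1)$, so that $r=2$, $a_1=0$, $a_2=1$.

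First I would point out that by equation (\ref{E:LS}), the Legendre-Stirling numbers of the second kind satisfy $LS_n^{(j)}=H_{j,n}^{x(x+1)}$. The $q$-analogue $LS_n^{(j)}(q)$ referenced in the statement is then naturally defined as $H_{j,n}^{x(x+1)}(q)$ via Definition \ref{D:qanal}, since $f=x(x+1)\in\N[x]$ has nonnegative integer coefficients and integer roots $0,-1$, which is exactly the hypothesis under which Definition \ref{D:qanal} applies. Accordingly, $LS_n^{(j)}[k]$, the coefficient of $q^k$ in $LS_n^{(j)}(q)$, coincides with $H_{j,n}^{x(x+1)}[k]$.

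Next I would invoke Proposition \ref{P:IntGS} directly with this choice of $f$: it asserts that $H_{j,n}^{f}[k]$ equals the number of $f$-Stirling partitions $P$ of order $(n,j)$ with $s_P=k$. Substituting $f=x(x+1)$ yields precisely the claimed count. In particular, the $f$-Stirling partitions consist of $j$ nonempty blocks (labeled by their minima, with the block containing $1_1,1_2$ singled out) together with one additional subset $S_1$ (labeled $0$) allowed to receive only copies $m_2$; the element $m_1$ must lie in one of the first $j$ blocks, while $m_2$ may lie in any of the first $j+1$ subsets. This is exactly the structure used to count Legendre-Stirling partitions.

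There is essentially no obstacle: the corollary is an immediate specialization, and the only thing to verify is that the hypothesis on $f$ in Definition \ref{D:qanal} and in Proposition \ref{P:IntGS} is met, namely that $f$ factors into linear factors with nonnegative integer roots and $a_1\le a_2$, which holds since $0\le 1$. Thus the proof reduces to a one-line appeal to Proposition \ref{P:IntGS}.
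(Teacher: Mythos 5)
Your proof is correct and follows exactly the paper's route: the corollary is obtained by specializing Proposition \ref{P:IntGS} to $f(x)=x(x+1)$, i.e.\ $r=2$, $a_1=0$, $a_2=1$, using the identification $LS_n^{(j)}=H_{j,n}^{x(x+1)}$ from (\ref{E:LS}) and Definition \ref{D:qanal}. Your explicit description of the resulting partition structure ($m_1$ confined to the first $j$ blocks, $m_2$ allowed in the first $j+1$ subsets) is a faithful unpacking of the general definition and adds nothing beyond what the paper intends.
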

 This interpretation is similar to one in  \cite[Theorem 2]{Andrews2009}. Here  the authors fill $j+1$ sets with the numbers in (\ref{E:copiesnumbers}), such that there exists a set (the "zero box") which is the only set that may be empty and it may not contain both copies of any number; the other $j$ sets are indistinguishable and each is non-empty; each such set contains both copies of its smallest element and does not contain both copies of any other elements. Consider an $x(x+1)$-Stirling partition and for all $m\le n$ if $A_{m_1}\le A_{m_2}$ move the element $m_1$ in the set immediately lower than $A_{m_1}$. It gives a bijective proof of the equivalence of both interpretations.

Finally, for the Jacobi-Stirling numbers,  Corollary \ref{C:interH} becomes:
\begin{corollary}

For all $j,n,k,z\in \N$, $JS_n^{(j)}[k]$ is the number of $x(x+z)$-Stirling partitions of order $(n,j)$.
\end{corollary}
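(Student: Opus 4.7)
The plan is to derive this corollary as an immediate specialization of Proposition \ref{P:IntGS} (or, for the ungraded version, Corollary \ref{C:interH}) to the polynomial $f(x) = x(x+z)$. The key identification is the one already recorded as (\ref{E:JS}), namely $JS_n^{(j)}(z) = H_{j,n}^{x(x+z)}$. This identification is justified by the remark made in the text between (\ref{E:JS}) and Definition \ref{D:qanal}: substituting $x_j = j(j+z) = f(j)$ in the generic recursion (\ref{E:Rich}) yields exactly the Jacobi-Stirling recursion (\ref{E:RecJacStir2}), and the initial conditions (\ref{E:Inih}) and (\ref{E:RecJacStir2ini}) match as well. Hence $JS_n^{(j)}(z)$ and $h_{n-j}(f(1),\dots,f(j))$ satisfy the same recurrence with the same boundary values, so they agree.

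Once the parameter $z$ is restricted to $\N$ (which is precisely the hypothesis of the stated corollary), the polynomial $f(x) = x(x+z)$ lies in $\N[x]$ with $r=2$, $a_1 = 0$ and $a_2 = z$, so it fits Definition \ref{D:qanal}. The corresponding $q$-analogue $H_{j,n}^{x(x+z)}(q)$ is therefore a legitimate $q$-refinement of $JS_n^{(j)}(z)$, and the notion of $x(x+z)$-Stirling partition of order $(n,j)$, with its statistic $s_P$, is well defined: two labeled copies of $\{1,\dots,n\}$ distributed into $j+z$ sets subject to the three bullet conditions of Subsection \ref{SS:Int1H}.

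With these identifications in place, the claim is just the application of Proposition \ref{P:IntGS} to this specific $f$: the coefficient of $q^k$ in $JS_n^{(j)}(z) = H_{j,n}^{x(x+z)}$ equals the number of $x(x+z)$-Stirling partitions of order $(n,j)$ with $s_P = k$, and summing over $k$ recovers the ungraded count stated. There is no real obstacle; everything is bookkeeping of specializations. The only point worth double-checking is that the hypothesis $a_i \in \N$ of Definition \ref{D:qanal} genuinely forces the restriction $z \in \N$ in the statement — the non-$q$ identity $JS_n^{(j)}(z) = H_{j,n}^{x(x+z)}$ persists for arbitrary real $z > -1$, but the combinatorial model requires integrality of $z$ so that the extra $a_2 = z$ ``ghost'' sets in the partition are a well-defined integer number of slots.
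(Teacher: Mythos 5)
Your proposal is correct and matches the paper's (implicit) argument exactly: the corollary is stated as the specialization of Proposition \ref{P:IntGS} and Corollary \ref{C:interH} to $f(x)=x(x+z)$ (so $r=2$, $a_1=0$, $a_2=z$), using the identification $JS_n^{(j)}(z)=H_{j,n}^{x(x+z)}$ from (\ref{E:JS}). Your observation that the integrality hypothesis $z\in\N$ is what makes the combinatorial model (and the $q$-analogue) well defined, and that the statement as printed conflates the graded count (with $s_P=k$) and the ungraded one, is a fair reading of a small imprecision in the paper's wording.
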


\subsection{Combinatorial interpretation of $E_{j,n}^{f}[k]$}
\label{SS:Int1E}

In \cite{Egge2010}, for any cycle of a permutation Egge defines the \emph{cycle maxima} (resp. \emph{cycle minima}) as the maximum (resp. minimum) of the numbers in the cycle. 
We consider an $r$-tuple of permutations $\pi=(\pi_1,\dots,\pi_r)$ with $\pi_i\in S_{n+a_i}$ for all $i=1,\dots,r$ and 	we say that $\pi$ is {\bf $f$-Stirling} of order $(n,j)$ if and only if 
\begin{enumerate}[(a)]
  \item each $\pi_i$ has exactly $j+a_i$ cycles;
	\item $\pi_1,\pi_2,\dots,\pi_r$ have the same cycle maxima less than $n+1$;
	\item the orbits of $n,n+1,\dots,n+a_i$ in $\pi_i$ are pairwise distinct for all $i\le r$.
\end{enumerate}

For example, an $f$-Stirling $2$-tuple permutation of order $(3,2)$, with $f=(x+1)(x+2)$, is $\{(4)(3,1)(2), (5)(4,1)(3)(2)\}$. The $2$-tuples $\sigma_1=\{(4)(3,1)(2), (5)(4,1)(2,3)\}$, $\sigma_2=\{(4)(3,1)(2), (5)(4,2)(3)(1)\}$ and $\sigma_3=\{(4)(3,1)(2), (5,4)(3)(2)(1)\}$ are not $f$-Stirling since one of the previous conditions fails. 

Given a permutation $\rho\in S_n$ we define a word  in the alphabet $\{1,\dots, n\}$ in this way: we write each cycle of $\rho$ with the cycle maxima in the first place and we order the cycles by their cycle maxima in  decreasing order; then we omit the brackets. We call $s_\rho$ this word. For example, if $\rho=(162)(45)\in S_6$ then $s_\rho=621543$; if $\sigma=(12)(45)\in S_6$ then $s_\sigma=654321$. Note that we can obtain the same word from two different permutations: for example if $\rho_1=(12)(45)\in S_6$, $\rho_2=(132)(654)\in S_6$ then $s_{\rho_1}=s_{\rho_2}=654321$.
Now let $s$ be a such word in the alphabet $\{1,\dots,n\}$. Let's define 
$$
\coinv_s^{(i)}:=\Big\vert\big\{j\in\{1,\dots,n\}\vert j>i \text{ and $i$ is on the left of }j  \text{ in } s \big\}\Big\vert.
$$
For example, if $s=623541$ then $\coinv_s^{(2)}=3;\coinv_s^{(5)}=0$. We then  set
$$
\coinv_s:=\sum_{i=1}^{n}\coinv_s^{(i)}.
$$
We call \emph{coinversions} of $s$ the number $\coinv_s$.
Equivalently, $\coinv_{s}$ is the minimum number of exchanges of two consecutive elements in $s$ to obtain the word $n,n-1,\dots, 2,1$.

\begin {proposition}
\label{P:Int1}
Fix $n,j,k\in \N$. Then $E_{j,n}^{f}[k]$ is the number of $f$-Stirling $r$-tuples of permutations $\pi=(\pi_1,\dots,\pi_r)$ such that 
\begin{equation}
\label{E:CondkGc}
\sum_{i=1}^r \coinv_{\pi_i}=k.
\end{equation}
\end{proposition}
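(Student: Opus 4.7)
My plan is to argue by induction on $n$, parallel to the proof of Proposition~\ref{P:IntGS}. Set $p_{j,n}^{f}(k)$ to be the number of $f$-Stirling $r$-tuples of order $(n,j)$ with $\sum_i\coinv_{\pi_i}=k$, and show $p_{j,n}^{f}(k)=E_{j,n}^{f}[k]$. The base cases ($n=0$ or $j=0$) are handled directly from the definitions: when $n=j=0$ the only tuple is the identities on $\{1,\dots,a_i\}$, whose words are strictly decreasing and hence have no coinversions; otherwise condition~(c) forces the count to be zero. For the inductive step with $n,j\ge 1$ the heart of the argument is a coinv-tracking bijection realising the $q$-recursion~(\ref{A:qE}).

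The key observation is that $1$ is a cycle maximum in $\pi_i$ if and only if $(1)$ is a singleton cycle, since $1$ is the smallest label. By condition~(b), the set of cycle maxima $\le n$ is common to all $\pi_i$, so either $(1)$ is a singleton in every $\pi_i$ or in none of them---a well-defined case split on the whole tuple. In \textbf{Case~A}, $(1)$ is a singleton in every $\pi_i$: I would delete $(1)$ from each $\pi_i$ and relabel $l\mapsto l-1$, producing a tuple $\pi'$ with $\pi'_i\in S_{n-1+a_i}$ and $j-1+a_i$ cycles. Condition~(b) is inherited (the common maxima shift down by one, with $1$ removed and $n$ kept---note $n$ must be a cycle maximum by condition~(c)); condition~(c) follows because the new special elements $n-1,n,\dots,n-1+a_i$ inherit the pairwise-disjoint orbits of the old $n,n+1,\dots,n+a_i$, none of which contained the singleton $(1)$. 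Because $(1)$ appears last in $s_{\pi_i}$ and $1$ is below every other label, deleting it does not alter any $\coinv_{s_{\pi_i}}^{(l)}$ with $l\ge 2$; the bijection is coinv-preserving and contributes the summand $E_{j-1,n-1}^{f}(q)$.

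In \textbf{Case~B}, $1$ lies in a non-trivial cycle in every $\pi_i$: I would set $x_i:=\pi_i^{-1}(1)$, delete $1$ from its cycle (the cycle maximum, being $>1$, is unchanged), and relabel $l\mapsto l-1$ to produce an $f$-Stirling tuple $\pi'$ of order $(n-1,j)$ together with a marked element $x_i\in\pi'_i$. The inverse reinserts $1$ immediately after $x_i$ in its cycle, and is always legal because inserting the smallest label into any cycle can only enlarge one existing orbit, never merge two of them, so condition~(c) cannot be spoiled. Each $x_i$ has $n-1+a_i$ independent choices; if $x_i$ sits at position $q_i$ in $s_{\pi'_i}$, then $1$ ends up at position $q_i+1$ in $s_{\pi_i}$, giving $\coinv_{s_{\pi_i}}^{(1)}=n-1+a_i-q_i=:h_i\in\{0,\dots,n-2+a_i\}$, while $\coinv_{s_{\pi_i}}^{(l)}$ for $l\ge 2$ is unchanged (label $1$ is too small to contribute). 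Summing over all $(x_1,\dots,x_r)$ reproduces exactly the weight $[n-1+a_1]\cdots[n-1+a_r]$ multiplying $E_{j,n-1}^{f}(q)$ in~(\ref{A:qE}).

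Combining the two cases yields $p_{j,n}^{f}(k)=p_{j-1,n-1}^{f}(k)+\sum_{h_1=0}^{n-2+a_1}\cdots\sum_{h_r=0}^{n-2+a_r} p_{j,n-1}^{f}(k-h_1-\cdots-h_r)$, which matches the coefficient of $q^k$ in~(\ref{A:qE}) and closes the induction. The main obstacle I expect is confirming that condition~(c) really survives the reductions, most delicately in Case~A (where one might worry that removing the singleton $(1)$ merges the orbit of some new special element with another); both directions rely precisely on $1$ being strictly smaller than every special element, which is also why the case split has to be made on $1$ rather than on~$n$.
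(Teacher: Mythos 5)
Your argument is correct and follows essentially the same route as the paper's proof: your two cases (the cycle $(1)$ being a singleton in every $\pi_i$ versus $1$ lying in a nontrivial cycle) are exactly the paper's two insertion constructions read backwards, and you track $\coinv^{(1)}_{\pi_i}\in\{0,\dots,n-2+a_i\}$ in the same way to match the $q$-recursion (\ref{A:qE}). You supply somewhat more detail than the paper on why the case split is well defined via condition (b) and why conditions (a)--(c) survive the reduction, but the underlying bijection and counting are identical.
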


\begin{proof}
We proceed by induction on $j$. If $j=0$, by conditions (a) and (c), we have only one possibility when $n=0$. If $n<j$, condition (a) is never satisfied. Indeed in both cases $E_{j,n}^{f}(q)=0$ except $E_{0,0}^{f}(q)=1$. Therefore, let's suppose $1\le j\le n$. We denote by $p_{j,n}^{f}(k)$ the number of $f$-Stirling $r$-tuples of permutations $\pi$ satisfying (\ref{E:CondkGc}).

We can make an $f$-Stirling $r$-tuple of permutations of order $(n,j)$ in two distinct ways:
1) start from an $f$-Stirling $r$-tuple of permutations $\pi$ of order $(n-1,j-1)$, increase each number of one unit and then add in each permutation $\pi_i$ (with abuse of notations) the trivial cycle $(1)$. In this case, for all $i\le r$ $\coinv_{\pi_i}^{(1)}=0$.  \\
2) start from an $f$-Stirling $r$-tuple of permutations $\pi$ of order $(n-1,j)$, increase each number of one unit and then add the element $1$ in each permutation $\pi_i$. This operation can be done in $n-1+a_i$ ways for all permutation $\pi_i$ (it is equivalent to put the new number in the word $s_{\pi_i}$ in all positions except the first) and $\coinv_{\pi_i}^{(1)}$ can assume all values between $0$ and $n-2+a_i$.
Therefore, we have 
\begin{equation}
\label{E:relpE}
p_{j,n}^{f}(k)=p_{j-1,n-1}^{f}(k)+\sum_{h_1=0}^{n-2+a_1}\cdots\sum_{h_r=0}^{n-2+a_r}p_{j,n-1}^{f}\Big(k-(h_1+\cdots +h_r)\Big).
\end{equation}
By (\ref{A:qE}), we know that
$$
E_{j,n}^{f}(q)=E_{j-1,n-1}^{f}(q)+\sum_{h_1=0}^{n-2+a_1}\cdots\sum_{h_r=0}^{n-2+a_r}q^{h_1+\cdots +h_r}E_{j,n-1}^{f}(q).
$$
Therefore, the coefficient of $q^k$ is
\begin{equation}
\label{E:relEk}
E_{j,n}^{f}[k]=E_{j-1,n-1}^{f}[k]+\sum_{h_1=0}^{n-2+a_1}\cdots\sum_{h_r=0}^{n-2+a_r}E_{j,n-1}^{f}\Big[k-(h_1+\cdots +h_r)\Big].
\end{equation}

By comparing (\ref{E:relpE}) and (\ref{E:relEk}) the thesis follows.
\end{proof}

For example, here are all $f$-Stirling $2$-tuples of permutations of order $(3,2)$ when $f=(x+1)(x+2)$. 
\begin{align*}
\sigma_1: &&(4,1)(3)(2); && (5,1)(4)(3)(2) && \coinv=2+3=5 \\
\sigma_2: &&(4,1)(3)(2); && (5)(4,1)(3)(2) && \coinv=2+2=4 \\
\sigma_3: &&(4,1)(3)(2); && (5)(4)(3,1)(2) && \coinv=2+1=3 \\
\sigma_4: &&(4,1)(3)(2); && (5)(4)(3)(2,1) && \coinv=2+0=2 \\
\sigma_5: &&(4)(3,1)(2); && (5,1)(4)(3)(2) && \coinv=1+3=4 \\
\sigma_6: &&(4)(3,1)(2); && (5)(4,1)(3)(2) && \coinv=1+2=3 \\
\sigma_7: &&(4)(3,1)(2); && (5)(4)(3,1)(2) && \coinv=1+1=2 \\
\sigma_8: &&(4)(3,1)(2); && (5)(4)(3)(2,1) && \coinv=1+0=1 \\
\sigma_9: &&(4)(3)(2,1); && (5,1)(4)(3)(2) && \coinv=0+3=3 \\
\sigma_{10}: &&(4)(3)(2,1); && (5)(4,1)(3)(2) && \coinv=0+2=2 \\
\sigma_{11}: &&(4)(3)(2,1); && (5)(4)(3,1)(2) && \coinv=0+1=1 \\
\sigma_{12}: &&(4)(3)(2,1); && (5)(4)(3)(2,1) && \coinv=0+0=0 \\
\sigma_{13}: &&(4,2)(3)(1); && (5,2)(4)(3)(1) && \coinv=1+2=3 \\
\sigma_{14}: &&(4,2)(3)(1); && (5)(4,2)(3)(1) && \coinv=1+1=2 \\
\sigma_{15}: &&(4,2)(3)(1); && (5)(4)(3,2)(1) && \coinv=1+0=1 \\
\sigma_{16}: &&(4)(3,2)(1); && (5,2)(4)(3)(1) && \coinv=0+2=2 \\
\sigma_{17}: &&(4)(3,2)(1); && (5)(4,2)(3)(1) && \coinv=0+1=1 \\
\sigma_{18}: &&(4)(3,2)(1); && (5)(4)(3,2)(1) && \coinv=0+0=0
\end{align*}
Indeed, $E_{2,3}^{(x+1)(x+2)}(q)=2+4q+5q^2+4q^3+2q^4+q^5$.
When $r=1$ and $a_1=0$, with the same arguments of the proof of Proposition \ref{P:Int1}, we have the following new combinatorial interpretation of $q$-Stirling numbers of the first kind.
\begin{corollary}
\label{C:Stirling1}
For all $j,n,k\in\N$, $S(n,j)[k]$ is the number of  permutations $\rho\in S_n$ with $j$ cycles  with $\coinv_{s_\rho}=k$.
\end{corollary}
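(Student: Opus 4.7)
The plan is to specialize the proof of Proposition \ref{P:Int1} to the case $r=1$, $a_1=0$. In that regime an $f$-Stirling $r$-tuple degenerates to a single permutation $\rho\in S_n$; condition (a) reads ``$\rho$ has exactly $j$ cycles,'' while (b) and (c) are vacuous, and the aggregate statistic $\sum_{i=1}^r\coinv_{\pi_i}$ collapses to $\coinv_{s_\rho}$. By (\ref{E:Stir}), the polynomial $E_{j,n}^x(q)$ is the relevant $q$-analogue of the Stirling number of the first kind, and the defining recursion (\ref{A:qE}) simplifies to
$$E_{j,n}^x(q)=E_{j-1,n-1}^x(q)+[n-1]\,E_{j,n-1}^x(q).$$

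Let $p_{j,n}(k)$ denote the number of permutations $\rho\in S_n$ with $j$ cycles and $\coinv_{s_\rho}=k$. The base cases $n=0$ and $j=0$ match the initial data of $E_{j,n}^x(q)$. For the inductive step, each $\rho\in S_n$ with $j$ cycles arises uniquely from a permutation $\rho'$ on $\{2,\dots,n\}$ in exactly one of two ways: either (i) $(1)$ is appended to some $\rho'$ with $j-1$ cycles as a new singleton cycle, or (ii) the element $1$ is inserted into an existing cycle of some $\rho'$ with $j$ cycles. Since $1$ is the smallest element, its placement cannot affect $\coinv_{s_\rho}^{(i)}$ for $i\ge 2$, so $\coinv_{s_\rho}=\coinv_{s_{\rho'}}+\coinv_{s_\rho}^{(1)}$. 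In case (i), the cycle $(1)$ has max $1$ and is rightmost in $s_\rho$, giving $\coinv_{s_\rho}^{(1)}=0$. In case (ii), the letter $1$ can be placed immediately after any one of the $n-1$ letters of $s_{\rho'}$; equivalently, into any position of $s_\rho$ except the very first (which must host the overall maximum $n$). These $n-1$ options correspond bijectively to $\coinv_{s_\rho}^{(1)}\in\{0,1,\dots,n-2\}$.

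Combining the two contributions yields
$$p_{j,n}(k)=p_{j-1,n-1}(k)+\sum_{h=0}^{n-2}p_{j,n-1}(k-h),$$
which coincides term-by-term with the coefficient of $q^k$ extracted from the recursion for $E_{j,n}^x(q)$. The main subtle point—where the argument requires care—is the invariance of $\coinv_{s_\rho}^{(i)}$ for $i\ge 2$ under insertion of the minimum element $1$, together with the precise identification of the $n-1$ legal insertion positions with the values $0,1,\dots,n-2$ of $\coinv_{s_\rho}^{(1)}$; the remaining bookkeeping is routine.
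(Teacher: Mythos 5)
Your proof is correct and is exactly the paper's intended argument: the paper justifies this corollary with the one-line remark that it follows by running the proof of Proposition \ref{P:Int1} in the special case $r=1$, $a_1=0$, and you have simply written out that specialization in full, correctly identifying the insertion of the minimum element $1$ into the word $s_{\rho'}$ (any position but the first) as the source of the factor $[n-1]$ and checking that $\coinv^{(i)}$ for $i\ge 2$ is unaffected. The only point worth flagging is notational: the quantity being interpreted is the first-kind $q$-Stirling number, i.e.\ the coefficient of $q^k$ in $E^{x}_{j,n}(q)$, as you assume, so the symbol $S(n,j)$ in the corollary statement (which the paper reserves for the second kind) should read $c(n,j)$.
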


When we evaluate all polynomials in $q=1$ we have
\begin{corollary}
\label{C:intE}
For all $j,n\in\N$, $E_{j,n}^{f}$ is the number of $f$-Stirling $r$-tuples of permutations of order $(n,j)$.
\end{corollary}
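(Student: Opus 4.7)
The plan is to derive Corollary \ref{C:intE} as an immediate specialization of Proposition \ref{P:Int1} at $q=1$, so the work reduces to two tasks: verifying that $E_{j,n}^{f}(1)$ equals $E_{j,n}^{f}$, and observing that summing the refined count of Proposition \ref{P:Int1} over all $k$ yields the total number of $f$-Stirling $r$-tuples.

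First I would compare recursions. The polynomial $E_{j,n}^{f}(q)$ is defined by (\ref{A:qE}) using the factor $[n-1+a_1]\cdots[n-1+a_r]$, and $[m]=1+q+\cdots+q^{m-1}$ evaluates to $m$ at $q=1$. Hence at $q=1$ the recursion (\ref{A:qE}) becomes
\begin{equation*}
E_{j,n}^{f}(1)=E_{j-1,n-1}^{f}(1)+(n-1+a_1)(n-1+a_2)\cdots(n-1+a_r)\,E_{j,n-1}^{f}(1),
\end{equation*}
with the same boundary conditions as $E_{j,n}^{f}$. On the other hand, setting $f(x)=(x+a_1)\cdots(x+a_r)$, the elementary symmetric recursion (\ref{E:Rice}) applied to the sequence $f(1),\dots,f(n-1)$ gives exactly
\begin{equation*}
E_{j,n}^{f}=E_{j-1,n-1}^{f}+f(n-1)\,E_{j,n-1}^{f}=E_{j-1,n-1}^{f}+(n-1+a_1)\cdots(n-1+a_r)\,E_{j,n-1}^{f},
\end{equation*}
together with the boundary conditions (\ref{E:Inie}). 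A straightforward induction on $n$ then gives $E_{j,n}^{f}(1)=E_{j,n}^{f}$.

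Second, Proposition \ref{P:Int1} states that $E_{j,n}^{f}[k]$ counts $f$-Stirling $r$-tuples $\pi=(\pi_1,\dots,\pi_r)$ of order $(n,j)$ with $\sum_i \coinv_{\pi_i}=k$. Summing over all nonnegative integers $k$ yields
\begin{equation*}
E_{j,n}^{f}(1)=\sum_{k\ge 0}E_{j,n}^{f}[k]=\#\{f\text{-Stirling $r$-tuples of order }(n,j)\},
\end{equation*}
since every such $r$-tuple has some finite coinversion total and is therefore counted by exactly one summand. Combining this with the identity $E_{j,n}^{f}(1)=E_{j,n}^{f}$ from the previous step establishes the corollary.

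There is no real obstacle here; the only point that needs care is the matching of initial conditions and the verification that $q=1$ collapses the $q$-bracket recursion to the ordinary one, which is immediate from $[m]_{q=1}=m$.
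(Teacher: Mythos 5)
Your proposal is correct and follows exactly the route the paper takes: Corollary \ref{C:intE} is obtained by evaluating Proposition \ref{P:Int1} at $q=1$, with the identification $E_{j,n}^{f}(1)=E_{j,n}^{f}$ following from the fact that the recursion (\ref{A:qE}) at $q=1$ coincides with (\ref{E:Rice}) applied to $f(1),\dots,f(n-1)$, together with matching initial conditions. The only difference is that you spell out the verification that the paper leaves implicit.
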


For $r=2$, $a_1=0$ and $a_2=1$, Corollary \ref{C:intE} reduces to \cite[Theorem 2.5]{Egge2010}. 

\section{Combinatorial interpretations as polynomials}
\label{S:Int2}
In this section we consider the numbers $H_{j,n}^{f}$ and $E_{j,n}^{f}$ as polynomials in  $a_1,\dots,a_r$ with $r$  a fixed positive integer and $f(x)=(x+a_1)\cdots(x+a_r)$.
\subsection{Combinatorial interpretation of $H_{j,n}^{f}$}
\label{SS:Int2H}
Fix $j,n\in \N$ and consider exactly $r$ copies of numbers $0,\dots,n$, i.\ e.\
\begin{equation}
\label{E:copiesnumbers2}
0_1,0_2,\dots,0_r,1_1,1_2,\dots,1_r,\dots\dots,{n}_1,{n}_2,\dots,{n}_r.
\end{equation}
We consider a partition $P$ of (\ref{E:copiesnumbers2}) and we say that $P$ is  \emph{$r$-Stirling} of order $(n,j)$ if  $P$ is  a partition of (\ref{E:copiesnumbers2}) into $j+1$ subsets such that
\begin{itemize}
	\item all the $j+1$ subsets are nonempty;
	\item each subset contains the minimum number with all its indices;
	\item if $n\ne 0$ then no subset contains both $0_i,1_i$ for any index $i\le r$. 
\end{itemize}
Given such a partition $P$, we call $0$-subset of $P$ the only subset that contains $0_1,\dots,0_r$.

For example, a $2$-Stirling partition of order $(3,2)$ is $(\{0_1,0_2,3_2\}, \{1_1,1_2,3_1\},$ $\{2_1,2_2\})$. The partitions $P_1=(\{0_1,0_2,2_1,2_2\},\{1_1,1_2,3_1,3_2\}, \{\})$, $P_2=(\{0_1,0_2,1_1\},$ $\{1_2,2_1,2_2\}, \{3_1,3_2\})$ and $P_3=(\{0_1,0_2,1_1,1_2\}, \{2_1,2_2\}, \{3_1,3_2\})$ are not $2$-Stirling since one of the previous condition fails.
%Now, given a $r$-Stirling partition $P$ of order $(n,j)$, we label each subset in $P$ with the minimum number that it contains. Therefore, we can define a total order relation between the subsets of $P$. Moreover we can say that a subset is greater or less than a number via its label. Using these order relations we can define the following numbers.

\begin{proposition}
\label{P:Int2H}
For all $j,n\in\N$ and for all $\beta_1,\dots,\beta_r\in\N$ we have that the coefficient of $a_1^{\beta_1}a_2^{\beta_2}\cdots a_r^{\beta_r}$ in $H_{j,n}^f$ is the number of $r$-Stirling partitions of order $(n,j)$ whose  $0$-subset contains $\beta_i+1$ numbers with index $i$ for all $i\in\{1,\dots,r\}$. 
\end{proposition}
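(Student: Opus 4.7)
The plan is to induct on $n$, using the recursion obtained from (\ref{E:Rich}) by substituting $x_i = f(i) = (i + a_1)(i + a_2) \cdots (i + a_r)$:
\begin{equation*}
H_{j,n}^f = H_{j-1,n-1}^f + (j + a_1)(j + a_2) \cdots (j + a_r)\, H_{j, n-1}^f.
\end{equation*}
The factorization $(j + a_1) \cdots (j + a_r) = \sum_{T \subseteq \{1, \dots, r\}} j^{r - |T|} \prod_{i \in T} a_i$ dictates the combinatorial dichotomy on any $r$-Stirling partition $P$ of order $(n, j)$: either $\{n_1, \dots, n_r\}$ forms a block of $P$ by itself (Case 1), or each $n_i$ joins a block of smaller minimum (Case 2), with $T \subseteq \{1, \dots, r\}$ recording which of the $n_i$'s land in the $0$-subset. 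The base case $n = 0$ is immediate: only $(\{0_1, \dots, 0_r\})$ is admissible, forcing $j = 0$ and $\beta_1 = \cdots = \beta_r = 0$, matching $H_{0,0}^f = 1$.

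For the inductive step with $n \ge 1$, Case 1 partitions biject with $r$-Stirling partitions of order $(n-1, j-1)$ having the same $0$-subset, contributing precisely the coefficient of $a_1^{\beta_1} \cdots a_r^{\beta_r}$ in $H_{j-1, n-1}^f$ by the inductive hypothesis. For Case 2, deleting $n_1, \dots, n_r$ yields an $r$-Stirling partition $P'$ of order $(n-1, j)$ whose $0$-subset has distribution obtained from $(\beta_1, \dots, \beta_r)$ by decrementing $\beta_i$ for each $i \in T$; conversely, given such a $P'$, one reconstructs $P$ by placing each $n_i$ with $i \in T$ in the $0$-subset (one choice) and each $n_i$ with $i \notin T$ in one of the $j$ non-$0$ subsets ($j$ choices). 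Summing over $T$ and invoking the inductive hypothesis yields exactly the coefficient of $a_1^{\beta_1} \cdots a_r^{\beta_r}$ in $(j + a_1) \cdots (j + a_r)\, H_{j, n-1}^f$, so that the two cases together match the recursion term by term.

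The main subtlety is the boundary $n = 1$, where the third defining condition of $r$-Stirling partitions (no block contains both $0_i$ and $1_i$) would a priori forbid placing $1_i$ in the $0$-subset in Case 2 with $i \in T$. This obstruction is vacuous, however, because Case 2 for $n = 1$ requires a nonempty $r$-Stirling partition of order $(0, j)$ with $j \ge 1$, which does not exist; correspondingly, $H_{j, 0}^f = 0$ for such $j$ kills the second term of the recursion. One also checks that Case 2 preserves the remaining two conditions automatically for $n \ge 2$ (blocks stay nonempty after deletion since no block has minimum $n$ in Case 2, and minima are unchanged by adding or removing the largest element). With these boundary points verified, the induction proceeds uniformly and yields the claim.
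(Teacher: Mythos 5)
Your proof is correct and follows essentially the same route as the paper: induction on $n$ via the recursion $H_{j,n}^f=H_{j-1,n-1}^f+f(j)H_{j,n-1}^f$, with the same two-case decomposition (new block $\{n_1,\dots,n_r\}$ versus distributing the $n_i$'s into existing blocks) and the expansion of $(j+a_1)\cdots(j+a_r)$ tracking which $n_i$'s enter the $0$-subset. Your extra check of the $n=1$ boundary against the third defining condition is a detail the paper passes over silently, but it does not change the argument.
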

\begin{proof}
We argue by induction on $n$. We set $p_{j,n}(\beta_1,\dots,\beta_r)$ the number of $r$-Stirling partitions $P$ of order $(n,j)$  such that the $0$-subset of $P$ contains $\beta_i+1$ numbers with index $i$ for all $i\in\{1,\dots,r\}$. If $n=0$ then we put the numbers $0_1,\dots,0_r$ in the same subset. Therefore there exists $r$-Stirling partitions of order $(0,j)$ if and only if $j=0$. Indeed, we have $H_{0,j}^f=\delta_{0,j}$.
If $j=0$, since we cannot put $0_1,\dots,0_r,1_1,\dots,1_r$ in the same subset, then necessarily $n=0$. 

Now, let we suppose that $j,n\ge 1$ and the claim true for smaller values. We can obtain an $r$-Stirling partition $P$ or by adding a subset $\{n_1,\dots,n_r\}$ to an $r$-Stirling partition $P_1$ of order $(n-1,j-1)$, or by adding each number $n_i$ with $1\le i \le r$ in one of the $j+1$ subsets of an $r$-Stirling partition $P_2$ of order $(n-1,j)$. In the first case, the $0$-subset of $P_1$ is the same of the $0$-subset of $P$. In the second case, for all $i\in\{1,\dots,r\}$ we have $j+1$ possibility to fix the position of $n_i$, only one of which changes the $0$-subset.

Therefore, we can obtain a partition $P$ as in the claim in
\begin{align}
p_{j,n}&(\beta_1,\dots,\beta_r)=p_{j-1,n-1}(\beta_1,\dots,\beta_r)+\notag\\
&+\sum_{i_1=0}^{\min(\beta_1,1)}\cdots \sum_{i_r=0}^{\min(\beta_r,1)} j^{r-(i_1+\cdots + i_r)} p_{j,n-1}(\beta_1-i_1,\dots,\beta_r-i_r)\label{E:NumPH}
\end{align} 
different ways. Easy to check, by recurrence (\ref{E:Rich}), that the coefficient of $a_1^{\beta_1}\cdots a_r^{\beta_r}$ in $H_{j,n}^f$ satisfy the same recurrence in (\ref{E:NumPH}), therefore the thesis follows.
\end{proof}
For example, here are all $2$-Stirling partitions of order $(3,2)$. 
\begin{align*}
P_1: &&\{0_1,0_2,3_1,3_2\} && \{1_1,1_2\} && \{2_1,2_2\} \\
P_2: &&\{0_1,0_2,3_1\} && \{1_1,1_2,3_2\} && \{2_1,2_2\} \\
P_3: &&\{0_1,0_2,3_1\} && \{1_1,1_2\} && \{2_1,2_2,3_2\} \\
P_4: &&\{0_1,0_2,3_2\} && \{1_1,1_2,3_1\} && \{2_1,2_2\} \\
P_5: &&\{0_1,0_2\} && \{1_1,1_2,3_1,3_2\} && \{2_1,2_2\} \\
P_6: &&\{0_1,0_2\} && \{1_1,1_2,3_1\} && \{2_1,2_2,3_2\} \\
P_7: &&\{0_1,0_2,3_2\} && \{1_1,1_2\} && \{2_1,2_2,3_1\} \\
P_8: &&\{0_1,0_2\} && \{1_1,1_2,3_2\} && \{2_1,2_2,3_1\} \\
P_9: &&\{0_1,0_2\} && \{1_1,1_2\} && \{2_1,2_2,3_1,3_2\} \\
P_{10}: &&\{0_1,0_2,2_1,2_2\} && \{1_1,1_2\} && \{3_1,3_2\} \\
P_{11}: &&\{0_1,0_2,2_1\} && \{1_1,1_2,2_2\} && \{3_1,3_2\} \\
P_{12}: &&\{0_1,0_2,2_2\} && \{1_1,1_2,2_1\} && \{3_1,3_2\} \\
P_{13}: &&\{0_1,0_2\} && \{1_1,1_2,2_1,2_2\} && \{3_1,3_2\} 
\end{align*}

Indeed, $H_{2,3}^f=5+3(a_1+a_2)+2a_1a_2$.

In the case of Jacobi-Stirling numbers we have $f=x(x+z)$: then, in our notation, we have $r=2$, $a_1=0$, and $a_2=z$. To set $a_1=0$ is equivalent to take only the monomials without factor $a_1$. By Proposition \ref{P:Int2H}, we have to consider only the $2$-Stirling partitions $P$ of order $(n,j)$ of $0_1,0_2,\dots,n_1,n_2$ with the $0$-subset without numbers labeled by $1$ except $0_1$. With this remark our interpretation Proposition \ref{P:Int2H} reduces to
\begin{corollary}
For all $j,n,\beta\in N$ the coefficient of $z^\beta$ in $JS_{j,n}(z)$ is the number of $2$-Stirling partitions of order $(n,j)$ whose $0$-subset contains $\beta+1$ numbers with index $2$ and only one number with index $1$ (necessarily $0_1$).
\end{corollary}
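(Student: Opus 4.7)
The plan is to derive this corollary as a direct specialization of Proposition \ref{P:Int2H}. By identity (\ref{E:JS}), we have $JS_n^{(j)}(z) = H_{j,n}^{x(x+z)}$, so the relevant polynomial is $f(x)=(x+a_1)(x+a_2)$ with $r=2$, $a_1=0$, and $a_2=z$. Thus $JS_n^{(j)}(z)$ is obtained from the bivariate polynomial $H_{j,n}^{(x+a_1)(x+a_2)}$ by the substitution $a_1 \mapsto 0$, $a_2 \mapsto z$.

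First I would observe that the substitution $a_1 = 0$ annihilates every monomial in $H_{j,n}^{(x+a_1)(x+a_2)}$ in which $a_1$ appears with positive exponent, so only the monomials $a_1^{0}a_2^{\beta_2}$ survive. Consequently, the coefficient of $z^\beta$ in $JS_n^{(j)}(z)$ equals the coefficient of $a_1^{0}a_2^{\beta}$ in $H_{j,n}^{(x+a_1)(x+a_2)}$.

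Next I would invoke Proposition \ref{P:Int2H} with $(\beta_1,\beta_2)=(0,\beta)$. It says this coefficient counts the $2$-Stirling partitions of order $(n,j)$ whose $0$-subset contains $\beta_1+1=1$ number with index $1$ and $\beta_2+1=\beta+1$ numbers with index $2$. Since the $0$-subset of any $r$-Stirling partition always contains $0_1,\ldots,0_r$ (in particular $0_1$ and $0_2$), the unique number with index $1$ in the $0$-subset is forced to be $0_1$. This is exactly the description in the statement, so the two counts agree and the corollary follows.

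There is essentially no obstacle: the argument is purely a restriction of the bivariate enumeration of Proposition \ref{P:Int2H} to the hyperplane $a_1=0$. The only minor point worth spelling out is why setting $a_1=0$ corresponds precisely to demanding $\beta_1=0$ rather than summing over $\beta_1$, which is immediate from the fact that all terms with $\beta_1 \geq 1$ carry a factor of $a_1$ and therefore vanish under the substitution.
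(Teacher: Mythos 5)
Your proposal is correct and matches the paper's own derivation: both specialize Proposition \ref{P:Int2H} to $r=2$, $a_1=0$, $a_2=z$, note that the substitution $a_1=0$ kills every monomial containing $a_1$ so only $\beta_1=0$ survives, and conclude that the $0$-subset must contain exactly one index-$1$ number, which is forced to be $0_1$. No gaps.
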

This result is equivalent of the one given in \cite[Theorem 2]{Gelineau2010}. Here the authors consider partitions of (\ref{E:copiesnumbers2}) such that  each set contains both copies of its smallest element and does not contain both copies of any other number. Consider  a $2$-Stirling partition whose $0$-subset has no positive numbers labeled by $1$. If $m_1,m_2$ are in the same set, with $1\le m\le n$, then move $m_1$ in the $0$-subset. This prove that the two interpretations are equivalent.

\subsection{Combinatorial interpretation of $E_{j,n}^{f}$}
\label{SS:Int2E}
Fix $j,n\in\N$ and consider an $r$-tuple of permutations $\pi=(\pi_1,\dots,\pi_r)\in S_{n+1}^r$. We say that $\pi$ is \emph{$r$-Stirling} of order $(n,j)$ if and only if 

\begin{enumerate}[(a')]
  \item each $\pi_i$ has exactly $j+1$ cycles;
	\item $\pi_1,\pi_2,\dots,\pi_r$ have the same cycle minima;
	\item if $n\ne0$ the orbits of $1$ and $2$ are disjoint for all permutations $\pi_i$.
\end{enumerate}

For example, a $2$-Stirling $2$-tuple of permutations of order $(3,2)$  is $\{(1)(2,4)(3)$, $(1)(2)(3,4)\}$. The $2$-tuples $\sigma_1=\{(1)(2,4)(3), (1)(2,3)(4)\}$ and $\sigma_2=\{(1,2)(3)(4)$, $(1,2)(3)(4)\}$ are not $2$-Stirling since one of the previous condition fails.

Following notation in \cite{Gelineau2010}, given a word $w=w(1)\dots w(l)$ on the finite alphabet $\{1,\dots,n+1\}$, a letter $w(j)$ is a \emph{record} of $w$ if $w(k)>w(j)$ for every $k\in\{1,\dots,j-1\}$. We define $\rec(w)$ to be the number of records of $w$ and given $\pi\in S_{n+1}$ we define $\rec(\sigma)=\rec(\sigma(1),\sigma^2(1),\dots,1)$ (the elements are only in the orbit of $1$).

\begin{proposition}
\label{P:Int2E}
For all $n,j\in\N$ and for all $\beta_1,\dots,\beta_r\in\N$ the coefficient of $a_1^{\beta_1}a_2^{\beta_2}\cdots a_r^{\beta_r}$ in $E_{j,n}^r$ is the number of $r$-Stirling $r$-tuples of permutations $\pi=(\pi_1,\dots,\pi_r)$ such that $\rec(\pi_i)=\beta_i+1$ for all $i=1,\dots,r$.
\end{proposition}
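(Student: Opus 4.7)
I would mirror the proof of Proposition~\ref{P:Int2H} and argue by induction on $n$, matching the recurrences of the algebraic and combinatorial sides. Write $c_{j,n}(\vec\beta)$ for the coefficient of $a_1^{\beta_1}\cdots a_r^{\beta_r}$ in $E_{j,n}^f$, and $p_{j,n}(\vec\beta)$ for the number of $r$-Stirling $r$-tuples of order $(n,j)$ with $\rec(\pi_i)=\beta_i+1$ for all $i$. The recurrence (\ref{E:Rice}) rewrites as $E_{j,n}^f = E_{j-1,n-1}^f + f(n-1)\,E_{j,n-1}^f$, and expanding $f(n-1) = \prod_{i=1}^r(n-1+a_i)$ monomial by monomial yields
\[
c_{j,n}(\vec\beta) = c_{j-1,n-1}(\vec\beta) + \sum_{\substack{\vec\epsilon\in\{0,1\}^r \\ \vec\epsilon\le\vec\beta}} (n-1)^{r-|\vec\epsilon|}\, c_{j,n-1}(\vec\beta-\vec\epsilon).
\]
The base cases $n=0$ (the only tuple is the identity in $S_1^r$, giving $p_{0,0}(0,\dots,0)=1$) and $j=0$ with $n\ge 1$ (where (c') can never be satisfied) match $E_{0,0}^f=1$ and $E_{0,n}^f=0$ respectively.

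For the inductive step I would decompose tuples of order $(n,j)$ according to the role of $n+1$. Condition (b') forces the following: if $(n+1)$ is a fixed point of some $\pi_i'$ then $n+1$ is a cycle minimum of every $\pi_k'$, hence a fixed point of every $\pi_k'$. In this first case, removing the cycle $(n+1)$ from every permutation gives a bijection with $r$-Stirling tuples of order $(n-1,j-1)$; the orbit of $1$ is untouched so $\rec$-values are preserved, contributing $p_{j-1,n-1}(\vec\beta)$. Otherwise $n+1$ lies inside a longer cycle of every $\pi_i'$, and deleting $n+1$ from its cycle yields an $r$-Stirling tuple $\pi$ of order $(n-1,j)$ together with the predecessors $x_i\in\{1,\dots,n\}$ (where $\pi_i'(x_i)=n+1$); this is a bijection and each of the conditions (a'), (b'), (c') transfers under the correspondence.

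The heart of the argument is the lemma that inserting $n+1$ after $x_i$ in $\pi_i\in S_n$ increases $\rec(\pi_i)$ by $1$ if $x_i=1$ and leaves it unchanged otherwise. I would prove this by inspecting the orbit-of-$1$ sequence $(\pi_i'(1),(\pi_i')^2(1),\dots,1)$ in three cases: if $x_i$ is outside the orbit of $1$ in $\pi_i$, the sequence is unchanged; if $x_i=1$, the sequence becomes $(n+1,\pi_i(1),\dots,1)$, so $n+1$ is a vacuous record in first position and, being strictly larger than every element of $\{1,\dots,n\}$, preserves all subsequent records; if $x_i$ lies in the orbit of $1$ with $x_i\ne 1$, then $n+1$ is inserted strictly inside the sequence with a predecessor $\le n$, so it is not a record, while it is also a maximum of every prefix containing it, so no other record is disturbed.

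Applying the lemma, for each $i$ exactly one of the $n$ possible insertions (namely $x_i=1$) increments $\rec(\pi_i)$ while the remaining $n-1$ preserve it. Encoding the choice by $\vec\epsilon\in\{0,1\}^r$ (with $\epsilon_i=1$ iff $x_i=1$), an old tuple with $\rec$-parameters $\vec\beta-\vec\epsilon$ yields $(n-1)^{r-|\vec\epsilon|}$ new tuples with $\rec$-parameters $\vec\beta$, so the second case contributes $\sum_{\vec\epsilon\le\vec\beta}(n-1)^{r-|\vec\epsilon|}\,p_{j,n-1}(\vec\beta-\vec\epsilon)$. Combining the two cases gives exactly the recurrence satisfied by $c_{j,n}(\vec\beta)$, closing the induction. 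The principal obstacle is the insertion lemma: once it is established the rest is bookkeeping, but its proof hinges on a careful case analysis of left-to-right minima under a local perturbation of the cyclic sequence containing $1$.
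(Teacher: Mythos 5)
Your proposal is correct and follows essentially the same route as the paper: induction on $n$, the two-case decomposition according to whether $n+1$ is a fixed point or sits inside a longer cycle, and the key observation that inserting $n+1$ increments $\rec(\pi_i)$ exactly when $\pi_i(1)=n+1$, yielding the same recurrence that the coefficients of $a_1^{\beta_1}\cdots a_r^{\beta_r}$ satisfy via (\ref{E:Rice}). Your insertion lemma is in fact argued more carefully than in the paper, which states the record-counting claim with only a brief justification.
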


\begin{proof}
If $j=0$, by conditions (a') and (c'), we have only one $r$-tuple of $r$-Stirling permutations of order $(n,0)$ when $n=0$ and no one when $n>0$. If $n<j$, condition (a') is never satisfied. Indeed, in both cases $E_{j,n}^f=0$ except $E_{0,0}=1$.
%If $j=n=1$ then we have only the $r$-tuple $\pi=(\pi_1,\dots,\pi_r)$ with $\pi_i=(1)(2)$ for all $i=1,\dots, r$, $\rec((1)(2))=1$ and $E_{1,1}^r=1$, .
Let's now suppose that $1\le j\le n$. We call $p_{j,n}^r(\beta_1,\dots,\beta_r)$ the number of $r$-Stirling $r$-tuples of permutations $\pi=(\pi_1,\dots,\pi_r)$ of order $(n,j)$ such that $\rec(\pi_i)=\beta_i+1$ for all $i=1,\dots,r$. We can construct a such $\pi$ in two different ways (and the reader can check that in these ways we obtain all such $r$-tuples). The first possibility is to start with an $r$-Stirling $r$-tuple of permutations of order $(n-1,j-1)$ and add a trivial cycle $(n+1)$ in each its permutation. In this case we preserve the orbit of $(1)$, i.\ e.\ the numbers of records of all permutations does not change. 

The second possibility is to start with an $r$-Stirling $r$-tuple $\pi'=(\pi'_1,\dots,\pi'_r)$ of permutations of order $(n-1,j)$ and add the number $n+1$ in each permutation $\pi'_i$. In this case we increase the numbers of records (i.\ e.\ $\rec(\pi_i)=\rec(\pi'_i)+1$) if and only if we put $n+1$ on the right of $1$, i.\ e.\ $\pi_i(1)=n+1$ (and in this case $\pi_i(n+1)$ is the new record): in fact in all other cases (exactly $n-1$ cases), the orbit of $1$ does not change or $n+1$ is in the orbit of $1$ but $\pi^{-1}_i(n+1)$ is on the left of $n+1$ and therefore $n+1$ is not a record.

So we have 
\begin{align}
p_{j,n}^r&(\beta_1,\dots,\beta_r)=p_{j-1,n-1}^r(\beta_1,\dots,\beta_r) + \notag \\   
&+ \sum_{i_1=0}^{\min(\beta_1,1)}\cdots\sum_{i_r=0}^{\min(\beta_r,1)}(n-1)^{r-(i_1+\cdots+i_r)}p_{j,n-1}(\beta_1-i_1,\cdots,\beta_r-i_r)\label{E:Int2pE}
\end{align}
different $r$-Stirling $r$-tuples of permutations of order $(n-j)$. Easy to check by recurrence relation (\ref{E:Rice}) that the coefficients of $a_1^{\beta_1}\cdots a_r^{\beta_r}$ in $E_{j,n}^f$ satisfy the same recurrence in (\ref{E:Int2pE}), therefore the thesis follows.
\end{proof}
For example, here are all $2$-Stirling $2$-tuples of permutations of order $(3,2)$. 
\begin{align*}
\sigma_1: &&(1,4)(2)(3); && (1,4)(2)(3) && \rec=(2,2) \\
\sigma_2: &&(1,4)(2)(3); && (1)(2,4)(3) && \rec=(2,1) \\
\sigma_3: &&(1,4)(2)(3); && (1)(2)(3,4) && \rec=(2,1) \\
\sigma_4: &&(1)(2,4)(3); && (1,4)(2)(3) && \rec=(1,2) \\
\sigma_5: &&(1)(2,4)(3); && (1)(2,4)(3) && \rec=(1,1) \\
\sigma_6: &&(1)(2,4)(3); && (1)(2)(3,4) && \rec=(1,1) \\
\sigma_7: &&(1)(2)(3,4); && (1,4)(2)(3) && \rec=(1,2) \\
\sigma_8: &&(1)(2)(3,4); && (1)(2,4)(3) && \rec=(1,1) \\
\sigma_9: &&(1)(2)(3,4); && (1)(2)(3,4) && \rec=(1,1) \\
\sigma_{10}: &&(1,3)(2)(4); && (1,3)(2)(4) && \rec=(2,2) \\
\sigma_{11}: &&(1,3)(2)(4); && (1)(2,3)(4) && \rec=(2,1) \\
\sigma_{12}: &&(1)(2,3)(4); && (1,3)(2)(4) && \rec=(1,2) \\
\sigma_{13}: &&(1)(2,3)(4); && (1)(2,3)(4) && \rec=(1,1) \\
\end{align*}
We have $5$ couples of permutations with records $(1,1)$, $3$ with records $(2,1)$ and $(1,2)$ and $2$ with records $(2,2)$.
The reader can check the proposition in this case since $E_{2,3}^f=5+3(a_1+a_2)+2a_1a_2$.

In the case of Jacobi-Stirling numbers we have $f=x(x+z)$: then, in our notation, we have $r=2$, $a_1=0$, and $a_2=z$. To set $a_1=0$ is equivalent to take only the monomials without factor $a_1$. By Proposition \ref{P:Int2E}, we have to consider only the $2$-Stirling $2$-tuples of permutations $\pi=(\pi_1,\pi_2)$ with the orbit of $1$ in $\pi_1$ be trivial. Therefore, Proposition \ref{P:Int2E} reduces to 

\begin{corollary}
For all $n,j,\beta\in \N$ the coefficient of $z^\beta$ in $Jc_{j,n}(z)$ is the number of $2$-Stirling $2$-permutations $(\pi_1,\pi_2)$ such that $\pi_1(1)=1$ and $\rec(\pi_2)=\beta+1$.
\end{corollary}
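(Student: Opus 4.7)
The plan is to specialize Proposition \ref{P:Int2E} with $r=2$, $a_1=0$, $a_2=z$, so that $f(x)=(x+a_1)(x+a_2)$ becomes $x(x+z)$; by (\ref{E:JS}) this gives $E_{j,n}^{f}=Jc_n^{(j)}(z)$. Viewing $E_{j,n}^{f}$ as a polynomial in $a_1,a_2$ and then setting $a_1=0$ retains precisely those monomials with $\beta_1=0$, so the coefficient of $z^\beta$ in $Jc_n^{(j)}(z)$ equals the coefficient of $a_1^0 a_2^\beta$ in $E_{j,n}^{f}$.

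Next I would apply Proposition \ref{P:Int2E} to the exponent vector $(\beta_1,\beta_2)=(0,\beta)$. This identifies the coefficient in question with the number of $2$-Stirling $2$-tuples $(\pi_1,\pi_2)$ of order $(n,j)$ satisfying $\rec(\pi_1)=1$ and $\rec(\pi_2)=\beta+1$. The only remaining step is to translate the condition $\rec(\pi_1)=1$ into the condition $\pi_1(1)=1$ that appears in the statement.

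To carry out this translation I would read directly from the definition of $\rec$, applied to the word $\pi_1(1),\pi_1^2(1),\dots,1$ that lists the orbit of $1$. If $\pi_1(1)=1$ the word has length one and contributes exactly one record, so $\rec(\pi_1)=1$. If instead $\pi_1(1)\neq 1$ the word has length at least two; its first letter is vacuously a record, and its last letter $1$ is also a record because $1$ is the minimum of the entire alphabet, so $\rec(\pi_1)\ge 2$. Hence $\rec(\pi_1)=1$ is equivalent to $\pi_1(1)=1$, and the stated count follows. The main obstacle is simply this small record-counting observation, but it is forced by the fact that $1$ is both the starting point of the orbit word and the global minimum; everything else is bookkeeping on top of Proposition \ref{P:Int2E} and the identification (\ref{E:JS}).
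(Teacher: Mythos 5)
Your proposal is correct and follows essentially the same route as the paper: specialize Proposition \ref{P:Int2E} with $r=2$, $a_1=0$, $a_2=z$, keep only the monomials with $\beta_1=0$, and observe that $\rec(\pi_1)=1$ is equivalent to the orbit of $1$ in $\pi_1$ being trivial, i.e.\ $\pi_1(1)=1$. The paper states this last equivalence without justification, whereas you spell out the record-counting argument (the first letter of the orbit word is vacuously a record and the final letter $1$ is a record whenever the word has length at least two), which is a welcome clarification but not a different approach.
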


Obviously, $\pi_1$ can be identified with a permutation of $S_n$. With this identification, our interpretation is the same of the one given in \cite[Theroem 7]{Gelineau2010}.

\section{Developments on some generalization of symmetric functions}
\label{s:Dev}
In this section we analyze functions already defined in Section 2 and introduced in \cite{Brenti1995} which generalize the elementary and complete symmetric functions. We give a combinatorial interpretation  if we evaluate them in $f(1),f(2),\dots$  as done in the previous sections. In particular, this approach is used to obtain a new interpretation in the case of the Jacobi-Stirling numbers.

In the next two subsections we give combinatorial interpretations of \\ $h_k^{\Rel_t}(f(1),\dots,f(n))$ and $a_k^{(t)}(f(1),\dots,f(n))$, where $f$ is the polynomial with all real zeros and nonnegative coefficients.

\subsection{Combinatorial interpretation of $h_n^{\Rel_t}$}
\label{SS:HRel}
 Given $r\in\N$, $a_1,\dots,a_r\in\N$, let $f$ be the polynomial $f(x)=(x+a_1)\cdots(x+a_r)$. Let $n\in\N$ and $\sigma\in S_n$ be a permutation. We decompose $\sigma$ in disjoint cycles and we write each cycle with its minimum at the first place. Then we order and label all cycles via its minimum. Therefore, examples of decompositions are $(136)(25)(4)$, $(1652)(34)$. We say that  two or more permutations have the same \emph{ordered cycle structure} if in the previous notation, the sequence of lengths of the cycles are equals (in the following we denote by $l(c)$ the length of a cycle $c$). For example, $(136)(25)(4)$ and $(123)(45)(6)$ have the same ordered cycle structure, while $(136)(25)(4)$ and $(15)(234)(6)$ no.

We say that one or more permutations  have the same  ordered cycle structure up to $k$ if the first $k$ elements of the sequences of the lengths of the cycles are equals for all permutations and the  lengths of remaining cycles (if they exist) are equal to $1$. For example $(132)(45)$ and $(176)(24)(3)(5) $ have the same ordered cycle structure up to $2$, $(132)(45)$ and $(176)(24)(35)$ no.

Now fix a permutation $\sigma$ with ordered cycles $c_{i_1},\dots,c_{i_r}$ for some integer $r\in\N$. Each cycle is labeled by its minimum element. We define a distance between two cycles via the following definition
\begin{equation}
\label{E:DefDist}
d(c_{i},c_j):=\big\vert\{c_k\in\sigma\vert i<k\le j\}\big\vert,
\end{equation}
if $i\le j$, else $d(c_i,c_j):=d(c_j,c_i)$. It is obvious that if two permutations have the same ordered cycle structure, then the distances between corresponding disjoint cycles are the same. Now we can show the following result.

\begin{proposition}
\label{P:IntHRel}
Let $r\in\PPP$, $a_1,\dots,a_r\in\N$ and $f(x)=(x+a_1)\cdots(x+a_r)$. Then for all $n,k\in\N$ and $t\ge 2$, $h_n^{\Rel_t}(f(1),\dots,f(k))$ is the number of elements in $S_{k+1+a_1}\times\cdots\times S_{k+1+a_r}$ such that all permutations have the same ordered cycle structure up to $k+1-n$, each one with  $k+1-n+a_i$ cycles and length at most $2$ and if $c_i,c_j$ are two cycles with $l(c_i)=l(c_j)=2$ then $d(c_i,c_j)\ge t-1$.
\end{proposition}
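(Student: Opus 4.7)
Using definition (\ref{E:GenSchurFun}),
$$
h_n^{\Rel_t}(f(1),\dots,f(k)) \;=\; \sum_{\substack{1\le i_1<\cdots<i_n\le k\\ i_{s+1}-i_s\ge t}} f(i_1)\cdots f(i_n).
$$
The plan is to prove the claim by a bijection: for each gap-$t$ tuple $(i_1,\dots,i_n)$ I will produce a family of admissible $r$-tuples $(\sigma_1,\dots,\sigma_r)$ of cardinality $\prod_s f(i_s)=\prod_s\prod_l(i_s+a_l)$, and show that these families partition the set described in the statement.

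First I would encode the combinatorial hypotheses as a common ``cycle-length pattern''. Because the ordered cycle lengths agree across the $\sigma_l$ up to position $k+1-n$, each $\sigma_l$ has $k+1+a_l$ elements distributed over $k+1-n+a_l$ cycles of length $\le 2$, and the last $a_l$ cycles are forced to be singletons, a direct count shows that exactly $n$ of the first $k+1-n$ cycle slots are $2$-cycles; denote their positions $p_1<\cdots<p_n\le k+1-n$. Since $d(c_a,c_b)=b-a$ for $a\le b$, the distance hypothesis is equivalent to $p_{s+1}-p_s\ge t-1$. The substitution $p_s:=k+2-s-i_{n+1-s}$ then gives a bijection between such admissible position patterns and gap-$t$ subsets $\{i_1<\cdots<i_n\}\subseteq\{1,\dots,k\}$: the bounds $p_1\ge 1$ and $p_n\le k+1-n$ translate to $i_n\le k$ and $i_1\ge 1$, while $p_{s+1}-p_s=i_{n+1-s}-i_{n-s}-1\ge t-1$ becomes the gap-$t$ condition on the $i_m$'s.

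For a fixed pattern I would count the compatible $\sigma_l$ by processing cycles in the order of their minima. Before placing the $s$-th $2$-cycle (at position $p_s$), exactly $2(s-1)+(p_s-s)=p_s+s-2$ elements of $\{1,\dots,N\}$ have been used, with $N:=k+1+a_l$; its minimum is forced to be the smallest unused integer, while the partner may be any of the remaining $N-p_s-s+1$ elements. The intervening $1$-cycles and the trailing $a_l$ singletons contribute no freedom, their minima being the leftover elements in increasing order. A direct substitution shows $N-p_s-s+1=a_l+i_{n+1-s}$, so the number of compatible $\sigma_l$ equals $\prod_s(a_l+i_s)$; multiplying over $l=1,\dots,r$ yields $\prod_s f(i_s)$, which is the required bijective match.

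The main delicate point is checking that the single substitution $p_s=k+2-s-i_{n+1-s}$ simultaneously converts the range, the gap-$(t-1)$ condition on positions, and the partner-count into the gap-$t$ condition and the factors $(a_l+i_s)$; once this identity is set up the remaining counting is forced, since $1$-cycles carry no choice. A consistency check is the recursion $h_n^{\Rel_t}(f(1),\dots,f(k))=h_n^{\Rel_t}(f(1),\dots,f(k-1))+f(k)\cdot h_{n-1}^{\Rel_t}(f(1),\dots,f(k-t))$, obtained by splitting on whether $i_n=k$, which matches combinatorially the case split on whether the last $2$-cycle sits at the last admissible slot, and would furnish an equivalent inductive proof.
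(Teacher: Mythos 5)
Your argument is correct, and it takes a genuinely different route from the paper. The paper proves the recursion
\begin{equation*}
h_n^{\Rel_t}(x_1,\dots,x_k)=h_n^{\Rel_t}(x_1,\dots,x_{k-1})+x_k\, h_{n-1}^{\Rel_t}(x_1,\dots,x_{k-t})
\end{equation*}
and then argues by induction on $n$ that the combinatorial objects satisfy the same recursion: the case split is on the cycles containing $1$ (all trivial in every coordinate, in which case one deletes them and reduces $k$ by one; or all of length $2$, in which case there are $\prod_i(k+a_i)=f(k)$ joint choices for them and the distance condition forces the next $t-2$ cycles to be trivial, reducing to parameters $(n-1,k-t)$). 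You instead expand $h_n^{\Rel_t}(f(1),\dots,f(k))$ as a sum over gap-$t$ index tuples and, for each tuple, exhibit exactly $\prod_s f(i_s)$ admissible $r$-tuples by fixing the common position pattern of the $2$-cycles and counting partner choices greedily; your substitution $p_s=k+2-s-i_{n+1-s}$ and the resulting counts $N-p_s-s+1=a_l+i_{n+1-s}$ check out, as do the translations of the range and of $d(c_a,c_b)=b-a\ge t-1$ into the gap-$t$ condition. What your approach buys is a closed-form, weight-preserving bijection that shows exactly which monomial $f(i_1)\cdots f(i_n)$ each family of tuples accounts for, and it specializes transparently to Proposition \ref{P:IntHRelCoeff}; what the paper's induction buys is brevity and a template that is reused essentially verbatim for the variants $\bar h_n^{\Rel_t}$ and $a_n^{(t)}$. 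Your closing remark about splitting on whether $i_n=k$ is precisely the paper's proof, so the two arguments are two faces of the same recursion, but your primary argument is the non-inductive one.
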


\begin{proof}
If $n=0$ then each permutation in $S_{k+1+a_i}$ as in the statement has $k+1+a_i$ disjoint cycles. Therefore it is the trivial permutation, indeed $h_0^{\Rel_t}(f(1),\dots,f(k))=1$. Moreover, if $k<n$ then $k+1-n\le 0$ and therefore, since the permutations have the same ordered cycle structure up to $k+1-n$, all cycles are trivial; but in this case $k+1-n+a_i=k+1+a_i$ and this is impossible. Indeed, in this case $h_n^{\Rel_t}(f(1),\dots,f(t))=0$.

Let's  suppose that $n\ge 1$, $k\ge n$ and the thesis true for smaller values of $n$. We fix an $r$-tuple of permutations as in the statement and we turn our attention to the element $1$ in each permutation. By assumption, all the first cycles have the same length, in particular they are all trivial or they have length $2$. In the first case we delete the trivial cycle $(1)$ in each permutation and decrease all other elements by one. Therefore we have elements in $S_{k+a_1}\times\cdots\times S_{k+a_r}$ whose permutations have the same ordered cycle structure up to $k-n$ and $k-n+a_i$ cycles. By induction, their number is $h_n^{\Rel_t}(f(1),\dots,f(k-1))$. In the second case, the first cycle of the $i$-th permutation contains a number in $\{2,\dots, k+1+a_i\}$. We delete all the first cycles (there are $\prod_{i=1}^r (k+a_i)=f(k)$ different first cycles) and rename all remaining numbers preserving the natural order of them. We get permutations in $S_{k-1+a_i}$ with $k-n+a_i$ cycles. Moreover, by assumption in each permutation the first $t-2$ cycles are trivial (check the distances) and then we can delete them. We get therefore permutations in $S_{(k-t)+1+a_i}$ with $(k-t)-(n-1)+1+a_i$ cycles. Their number is by induction $h_{n-1}^{\Rel_t}(f(1),\dots,f(k-t))$. 

By definition, it is simple to check that for all $n,k\ge 1$
\begin{equation}
\label{E:RecursionHRel}
h_n^{\Rel_t}(x_1,\dots,x_k)=h_n^{\Rel_t}(x_1,\dots,x_{k-1})+x_k h_{n-1}^{\Rel_t}(x_1,\dots,x_{k-t}).
\end{equation}
This complete the proof.
\end{proof}

As example, let $f(x)=x(x+1)$, $t=2$, $k=3$, $n=2$. It is simple to check that $h_2^{\Rel_2}(f(1),f(2),f(3))=f(1)f(3)=24$. We want obtain elements in $S_4\times S_5$ as in the previous statement. The first permutation will be $(12)(34)$, $(13)(24)$ or $(14)(23)$; the second permutation, which has the same ordered cycle structure  up to $2$ of the first, will be one of $(12)(34)(5)$, $(12)(35)(4)$, $(13)(24)(5)$, $(13)(25)(4)$, $(14)(23)(5)$, $(14)(25)(3)$, $(15)(23)(4)$, $(15)(24)(3)$. Indeed, we have $24$ possibilities.

If $f(x)=x(x+1)$, $t=3$, $k=4$, $n=2$, then $h_2^{\Rel_3}(f(1),f(2),f(3),f(4))=f(1)f(4)=40$. In this case we want elements in $S_5\times S_6$ as in the statement. The first permutation is one between $(12)(3)(45)$, $(13)(2)(45)$, $(14)(2)(35)$ and $(15)(2)(34)$; the second permutation is one between $(12)(3)(45)(6)$, $(12)(3)(46)(5)$, $(13)(2)(45)(6)$, $(13)(2)(46)(5)$, $(14)(2)(35)(6)$, $(14)(2)(36)(5)$, $(15)(2)(34)(6)$,\\ $(15)(2)(36)(4)$, $(16)(2)(34)(5)$, $(16)(2)(35)(4)$. We have $40$ possibilities.

In \cite{Brenti1995} we find the polynomials $\bar h_n^{\Rel_t}(x_1,\dots,x_k)=\sum_{i_1,\dots,i_r} x_{i_1}\cdots x_{i_r}$, that are the same as $h_n^{\Rel_t}$ but with another condition $i_1\ge t$. In this case, the interpretation is the same as that in Proposition \ref{P:IntHRel} but the ordered cycle structure is up to $k-n-t+2$. The proof is essentially the same.

It is possible to consider $h_n^{\Rel_t}(f(1),\dots,f(k))$ as polynomial in  $a_1,\dots, a_r$.
For this purpose we introduce the following definition. Given a permutation  $\sigma$ we say  that a number $n$ is a \emph{big number} if in the expansion of $\sigma$ in ordered disjoint cycles, $n$ is greater than each number on its right. For example in $(15)(24)(3)$ the big numbers are $5,4,3$; in $(14)(25)(3)$ the big numbers are $5,3$.  

\begin{proposition}
\label{P:IntHRelCoeff}
Let $r\in\PPP$,$f(x)=(x+a_1)\cdots(x+a_r)$  and $\beta_1,\dots, \beta_r\in\N$. Then for all $n,k\in\N$ and $t\ge 2$,  the coefficient of $a_1^{\beta_1}\cdots a_r^{\beta_r}$ in $h_n^{\Rel_t}(f(1),\dots,f(k))$ is the number of elements in $S_{k+2}^r$ such that all permutations have the same ordered cycle structure up to $k+1-n$, each one with  $k+2-n$ cycles and length at most $2$; if $c_i,c_j$ are two cycles with $l(c_i)=l(c_j)=2$ then $d(c_i,c_j)\ge t-1$ and for all $i=1,\dots, r$, the $i$-th permutation has $\beta_i+1$ big numbers.
\end{proposition}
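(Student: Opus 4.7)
The plan is to mirror the inductive proof of Proposition \ref{P:IntHRel}, using the recursion \eqref{E:RecursionHRel} evaluated at $x_j = f(j) = \prod_{i=1}^r (j + a_i)$. Writing $f(k) = \sum_{S \subseteq \{1, \ldots, r\}} k^{r - |S|} \prod_{i \in S} a_i$ and extracting the coefficient of $a_1^{\beta_1} \cdots a_r^{\beta_r}$ on both sides yields a recursion of the form
\begin{align*}
[a^{\beta}] h_n^{\Rel_t}(f(1), \ldots, f(k)) = & [a^{\beta}] h_n^{\Rel_t}(f(1), \ldots, f(k-1)) \\
& + \sum_{S \subseteq \{1,\ldots,r\}} k^{r-|S|} \, [a^{\beta - \mathbf{1}_S}] h_{n-1}^{\Rel_t}(f(1), \ldots, f(k-t))
\end{align*}
where $\mathbf{1}_S \in \{0,1\}^r$ is the indicator vector of $S$. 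The goal is to show that the number $p_n^k(\beta_1, \ldots, \beta_r)$ of $r$-tuples described in the statement satisfies exactly this recursion.

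The main bijection is a refinement of the one used for Proposition \ref{P:IntHRel}. Look at the first cycle of each $\pi_i$, which has common length $\ell \in \{1, 2\}$ by the ordered-cycle-structure hypothesis. If $\ell = 1$ for all $i$, delete the fixed point $(1)$ in each component and relabel, reducing to the case $(n, k-1)$; since $1$ is never a right-to-left maximum (for $k \geq 1$), every $\beta_i$ is unchanged and this accounts for the first summand. If $\ell = 2$, write the first cycle of $\pi_i$ as $(1, m_i)$ with $m_i \in \{2, \ldots, k+2\}$. The key observation---and the essence of the new proof---is that $m_i$ is a big number of $\pi_i$ if and only if $m_i = k+2$, because in every other case the letter $k+2$ appears later in the word and witnesses that $m_i$ is not a right-to-left maximum. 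Deleting $(1, m_i)$, then the next $t-2$ cycles (which are forced to be trivial by the structure constraint together with the distance $\geq t-1$ requirement), and order-preservingly relabeling, yields an $r$-tuple of permutations in $S_{(k-t)+2}^r$; this deletion preserves the big-number count of the remaining letters, so the overall big-number count of each $\pi_i$ drops by one exactly when $m_i = k+2$.

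Choosing the $m_i$ independently across $i$ (they are linked only through the common first-cycle length), the number of valid tuples with $\{i : m_i = k+2\} = S$ equals $k^{r-|S|}$ times the count of reduced tuples with parameters $(\beta_i - (\mathbf{1}_S)_i)_i$ at stage $(n-1, k-t)$. Summing over $S$ reproduces the polynomial recursion above. The base cases are routine: when $n = 0$ the only admissible $r$-tuple is the identity in each coordinate, whose unique big number is the letter $k+2$, giving a single contribution to the constant term and matching $h_0^{\Rel_t} = 1$; when $n > k$ the combinatorial conditions are infeasible while the polynomial vanishes.

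The step I expect to require the most care is tracking how big numbers transform under the two successive deletions in the length-$2$ case: verifying that the $t-2$ trivial-cycle removals following the first cycle do not alter the big-number count (each deleted letter is dominated by the untouched $k+2$ to its right, hence is not big) and that the order-preserving relabeling induces a bijection between big numbers of the old word and of the new word. Once this bookkeeping is checked, matching the recursions and concluding by induction on $n$ is immediate.
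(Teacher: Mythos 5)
Your proof follows the paper's approach exactly: the paper's own argument for this proposition is a two-sentence remark deferring to the proof of Proposition \ref{P:IntHRel} and observing that a contribution of $a_i$ arises precisely when the maximum $k+2$ lies in the first cycle of the $i$-th permutation (making it a big number), which is precisely what you carry out in detail via the coefficient-extracted recursion. One parenthetical needs repair: when $m_i=k+2$ the letter $k+2$ is itself deleted with the first cycle, so the correct reason the $t-2$ trivial-cycle letters are not big is that each is followed in the word by the (larger) minimum of the next cycle, not that $k+2$ sits to their right.
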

The proof is essentially the same as the one of Proposition \ref{P:IntHRel}. The only difference is  when all the first cycles have lengths $2$. When the maximum number is in the first cycle of the $i$-th permutation (so this number is big), we have a contribution of the indeterminate $a_i$. 

\subsection{Combinatorial interpretation of $a_n^{(t)}$}
\label{SS:ARel}
In this subsection we give a combinatorial interpretation of $a_n^{(t)}(f(1),\dots,f(k))$, where $f$ is as usual. For permutations, we use the same notation introduced in the previous subsection.

\begin{proposition}
\label{P:IntA}
Let $r\in\PPP$, $a_1,\dots, a_n\in\N$ and $f(x)=(x+a_1)\cdots (x+a_r)$. Then for all $n,k\in\N$ and $t\ge1$ $a_n^{(t)}(f(1),\dots,f(k))$ is the number of elements in $S_{k+1+a_1}\times\cdots\times S_{k+1+a_r}$ such that all permutations have the same ordered cycle structure up to $k+1-n$, each one with  $k+1-n+a_i$ cycles and if the $j$-th cycle is not trivial then $k+1-n\equiv j$ mod $t$.
\end{proposition}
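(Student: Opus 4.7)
The plan is to adapt the strategy used in the proof of Proposition \ref{P:IntHRel}: first derive a one-step recursion for $a_n^{(t)}$, then exhibit a bijection on the combinatorial side that reproduces it. Splitting the defining sum (\ref{E:GenaFun}) according to whether $i_n = k$ yields
\[
a_n^{(t)}(x_1,\ldots,x_k) = a_n^{(t)}(x_1,\ldots,x_{k-1}) + \varepsilon \, x_k \, a_{n-1}^{(t)}(x_1,\ldots,x_{k-1}),
\]
with $\varepsilon = 1$ when $k \equiv n \pmod{t}$ and $0$ otherwise. I will argue by induction on $k$, the base cases being $n=0$ (only the tuple of identity permutations qualifies, matching $a_0^{(t)}=1$) and $k<n$ (the required cycle count $k+1-n+a_i$ is infeasible in $S_{k+1+a_i}$, so both sides vanish).

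For the inductive step, fix a valid $r$-tuple $(\pi_1,\ldots,\pi_r)$ for parameters $(n,k)$ with $1\le n\le k$. The common ordered-structure condition implies that the first cycle of every $\pi_i$ has the same length $L_1$. In \emph{Case A} ($L_1=1$), I would delete the trivial first cycle $(1)$ from each $\pi_i$ and decrement the remaining labels by $1$. The resulting tuple lies in $S_{k+a_1}\times\cdots\times S_{k+a_r}$ with $k-n+a_i$ cycles in component $i$; a new cycle at position $j'$ is nontrivial iff the old cycle at $j'+1$ was, which translates to $j' \equiv k-n \equiv (k-1)+1-n \pmod{t}$. These are exactly the conditions of the proposition for parameters $(n,k-1)$, so Case A contributes $a_n^{(t)}(f(1),\ldots,f(k-1))$ by the induction hypothesis; the construction is clearly reversible.

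In \emph{Case B} ($L_1\ge 2$), the position condition on the first cycle forces $1\equiv k+1-n\pmod{t}$, i.e.\ $k\equiv n\pmod{t}$, so this case is empty exactly when $\varepsilon=0$. Otherwise I set, independently for each $i$, $x_i:=\pi_i^{-1}(1)\in\{2,\ldots,k+1+a_i\}$, remove $x_i$ from the first cycle of $\pi_i$ (so that its predecessor in the cycle now maps directly to $1$), and decrement every label greater than $x_i$. The first cycle then has common length $L_1-1$ and every other cycle is unchanged, so the new tuple lies in $S_{k+a_1}\times\cdots\times S_{k+a_r}$ with $k+1-n+a_i=(k-1)+1-(n-1)+a_i$ cycles, common ordered structure up to $(k-1)+1-(n-1)$, and nontrivial cycles confined to positions $\equiv(k-1)+1-(n-1)\pmod{t}$ --- precisely the conditions for parameters $(n-1,k-1)$. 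The independent choices $(x_1,\ldots,x_r)$ contribute the factor $\prod_i(k+a_i)=f(k)$, so Case B contributes $f(k)\,a_{n-1}^{(t)}(f(1),\ldots,f(k-1))$ by induction. Summing Cases A and B will reproduce the recursion, yielding the claim.

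The only substantive obstacle is verifying that the Case B operation really does preserve the three structural requirements of the proposition. The crucial observations are that $\pi_i^{-1}(1)$ is never a cycle minimum, so removing it leaves the cycle ordering (and hence the positions of all remaining cycles, in particular the positions of all nontrivial cycles) intact, and that $L_1-1$ is a common value across all $i$, so the common ordered-structure condition is preserved; together these make the inverse (re-insert $x_i$ as the new $\pi_i^{-1}(1)$ after shifting labels back up) well-defined and land back in Case B.
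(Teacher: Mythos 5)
Your proof is correct. It follows the same overall strategy as the paper's — induction on $k$, with the cycle containing $1$ carrying the entire inductive step — but it organizes the recursion differently. The paper strips the whole cycle of $1$ in one move: if that cycle has common length $h+1$, it can be chosen in $(k+a_i)(k+a_i-1)\cdots(k+a_i-h+1)$ ways in the $i$-th component, matching the recursion $a_n^{(t)}(x_1,\dots,x_k)=\sum_{h\ge 0}x_kx_{k-1}\cdots x_{k-h+1}\,a_{n-h}^{(t)}(x_1,\dots,x_{k-h-1})$ valid when $k\equiv n \pmod{t}$. You instead peel off the single element $\pi_i^{-1}(1)$, matching the first-order recursion $a_n^{(t)}(x_1,\dots,x_k)=a_n^{(t)}(x_1,\dots,x_{k-1})+\varepsilon\,x_k\,a_{n-1}^{(t)}(x_1,\dots,x_{k-1})$; both recursions are correct (yours unfolds into the paper's), and your congruence bookkeeping — Case B is nonempty only when $1\equiv k+1-n\pmod{t}$, i.e.\ $k\equiv n\pmod{t}$, and the residue $k+1-n$ governing the positions of nontrivial cycles is unchanged under $(n,k)\mapsto(n-1,k-1)$ — is exactly right. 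What your version buys is a genuinely one-step, $(k+a_i)$-to-one correspondence per component in place of a sum over cycle lengths $h$; what the paper's version buys is that the cycle of $1$ disappears entirely at once, so one never needs to verify, as you carefully do, that removing a non-minimal element preserves the cycle ordering, the positions of the nontrivial cycles, and the common ordered structure.
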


\begin{proof}
If $n=0$ then each permutation in $S_{k+1+a_i}$ as in the statement has $k+1+a_i$ disjoint cycles, then it is the trivial permutation, indeed $a_0^{t}(f(1),\dots,f(k))=1$. Moreover, if $k<n$ then $k+1-n\le 0$ and therefore, by hypothesis of ordered cycle structure up to $k+1-n$, all cycles are trivial; but in this case $k+1-n+a_i=k+1+a_i$ and this is impossible. Indeed, in this case $a_n^{(t)}(f(1),\dots,f(k))=0$.

Now let $n\ge 1$, $k\ge n$ and the thesis true for smaller values of $n$. We fix an $r$-tuple of permutation as in the statement and we analyze the cycles labeled by $1$. If $k\not\equiv n$ mod $t$ then they are all trivial, and we delete them. In this way we may obtain  $a_n^{(t)}(f(1),\dots,f(k-1))$ different $r$-tuples by induction. If $k\equiv n $ mod $t$, then the cycles labeled by $1$ have  arbitrary lengths (but all the same). Let we assume that these lengths are equal to $h+1$ with $h\ge0$. Then for all $i\le r$  the first cycle in the $i$-th permutation can be choosen between $(k+a_i)(k+a_i-1)\cdots(k+a_i-h+1)$ different cycles (only one choose if $h=0$). By multiplying over $i$ we get $f(k)\cdots f(k-h+1)$. We delete now all cycles labeled by $1$ and so we have elements in $S_{k-h+a_1}\times\cdots\times S_{k-h+a_r}$ that by induction are $a_{n-h}^{(t)}(f(1),\dots,f(k-h-1))$.

It is simple to check  by definition that $a_n^{(t)}(x_1,\dots,x_k)=a_n^{(t)}(x_1,\dots,x_{k-1})$ if $k\not\equiv n$ mod $t$ and that if $n\equiv k $ mod $t$ then \\$a_n^{(t)}(x_1,\dots,x_k)=\sum_{h\ge 0} x_kx_{k-1}\cdots x_{k-h+1}a_{n-h}^{(t)}(x_1,\dots,x_{k-h-1})$. 
Thus the proof is completed.
\end{proof}

If $t=1$, $a_n^{(1)}(x_1,\dots,x_k)=e_n(x_1,\dots,x_k)$. We then have
\begin{corollary}
\label{C:IntA}
For all $n,k\in \N$, $e_n(f(1),\dots,f(k))$ is the number of elements in $S_{k+1+a_1}\times\cdots\times S_{k+1+a_r}$ such that all permutations have the same ordered cycle structure up to $k+1-n$, each one with  $k+1-n+a_i$ cycles. 
\end{corollary}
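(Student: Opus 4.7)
The plan is to derive the corollary as an immediate specialization of Proposition \ref{P:IntA} with $t=1$, so almost all of the work has already been carried out and the task reduces to matching the two sides of the claim at $t=1$.

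First I would verify that $a_n^{(1)}(x_1,\dots,x_k) = e_n(x_1,\dots,x_k)$. By the definition (\ref{E:GenaFun}), $a_n^{(1)}$ is the sum of $x_{i_1}\cdots x_{i_n}$ over all strictly increasing sequences $1 \le i_1 < \cdots < i_n \le k$ satisfying $i_j \equiv j \pmod{1}$ for each $j$; since every integer is congruent to every other integer modulo $1$, this congruence constraint is vacuous, and the sum reduces exactly to the right-hand side of (\ref{E:ClassicalElemSymm}). So the left-hand side of the corollary equals $a_n^{(1)}(f(1),\dots,f(k))$.

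Next I would substitute $t=1$ into the statement of Proposition \ref{P:IntA}. The proposition counts $r$-tuples in $S_{k+1+a_1}\times\cdots\times S_{k+1+a_r}$ with a common ordered cycle structure up to position $k+1-n$, each permutation having $k+1-n+a_i$ cycles, subject to the extra condition that if the $j$-th cycle is not trivial then $k+1-n \equiv j \pmod{t}$. At $t=1$ this last congruence is automatically satisfied, so the condition drops out entirely and what remains is exactly the combinatorial description appearing in the corollary.

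Combining these two observations gives the corollary directly. There is no real obstacle in the argument; the only thing to check carefully is that the $t=1$ case of Proposition \ref{P:IntA} genuinely strips away the modular constraint (and that the proposition's hypothesis $t\ge 1$ permits this substitution, which it does). Once that is noted, no new induction or bijection is required.
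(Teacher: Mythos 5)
Your proof is correct and matches the paper's own derivation: the paper likewise obtains this corollary by observing that $a_n^{(1)}(x_1,\dots,x_k)=e_n(x_1,\dots,x_k)$ and then specializing Proposition \ref{P:IntA} to $t=1$, where the congruence condition on nontrivial cycles becomes vacuous. Your write-up simply spells out these two checks in more detail than the paper does.
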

In the particular case of the Jacobi-Stirling numbers of the first kind, by (\ref{E:JS}) we have the following new interpretation.

\begin{corollary}
\label{C:NewIntJS1}
For $n,k,z\in\N$ $Jc(n,k)$ is the number of elements in $S_n\times S_{n+z}$ whose permutations have the same ordered cycle structure up to $k$ and the number of cycles are respectively $k$ and $k+z$.
\end{corollary}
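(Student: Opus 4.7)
The plan is essentially to unwind the definitions and invoke Corollary \ref{C:IntA} with the right specialization; no new combinatorial reasoning is required.

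First, by the identification (\ref{E:JS}) we have $Jc_n^{(k)}(z)=E_{k,n}^{f}$ with $f(x)=x(x+z)$, and by the defining formula (\ref{D:Ef})
\[
E_{k,n}^{f}=e_{n-k}\bigl(f(1),f(2),\dots,f(n-1)\bigr).
\]
Writing $f(x)=(x+a_1)(x+a_2)$ with $a_1=0$, $a_2=z$, we are therefore in the setting of Corollary \ref{C:IntA} with $r=2$.

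Second, I apply Corollary \ref{C:IntA} after the substitution ``$n\rightsquigarrow n-k$, $k\rightsquigarrow n-1$'' (where the left-hand sides refer to the variable names used in that corollary). The corollary then asserts that $e_{n-k}(f(1),\dots,f(n-1))$ counts pairs of permutations in
\[
S_{(n-1)+1+a_1}\times S_{(n-1)+1+a_2}=S_{n}\times S_{n+z}
\]
whose ordered cycle structures agree up to position $(n-1)+1-(n-k)=k$ and whose numbers of cycles are $(n-1)+1-(n-k)+a_i=k+a_i$, namely $k$ and $k+z$. This is exactly the description in the statement of the corollary.

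The only point that requires any care is the index bookkeeping in the substitution, and in particular checking that the ordered-cycle-structure threshold coming from Corollary \ref{C:IntA} really simplifies to $k$ rather than something shifted. There is no genuine obstacle: the proof is a one-line reduction to Corollary \ref{C:IntA} via (\ref{E:JS}) and (\ref{D:Ef}).
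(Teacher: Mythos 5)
Your proposal is correct and follows the paper's own route: the paper likewise obtains this corollary by specializing Corollary \ref{C:IntA} (the $t=1$ case of Proposition \ref{P:IntA}) via the identification (\ref{E:JS}) with $f(x)=x(x+z)$, i.e.\ $r=2$, $a_1=0$, $a_2=z$. The index substitution you carry out explicitly ($n\rightsquigarrow n-k$, $k\rightsquigarrow n-1$, giving threshold $k$ and cycle counts $k$ and $k+z$) is exactly the bookkeeping the paper leaves implicit.
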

Obviously, we can apply Corollary \ref{C:NewIntJS1} to the Legendre-Stirling numbers of the first kind just by setting $z=1$.
For example, we know that $Lc(3,2)=8$. Indeed, applying Corollary \ref{C:NewIntJS1} we obtain the following elements of $S_3\times S_4$: $(12)(3),(12)(3)(4)$; $(12)(3),(13)(2)(4)$; $(12)(3),(14)(2)(3)$; $(13)(2),(12)(3)(4)$;\\ $(13)(2),(13)(2)(4)$; $(13)(2),(14)(2)(3)$; $(1)(23),(1)(23)(4)$; \ $(1)(23),(1)(24)(3)$.

It is possible to consider $a_n^{(t)}(f(1),\dots,f(k))$ as polynomial in  $a_1,\dots, a_r$. 

\begin{proposition}
\label{P:IntACoeff}
Let $r\in\PPP$, $\beta_1,\dots, \beta_n\in\N$ and $f(x)=(x+a_1)\cdots (x+a_r)$. Then for all $n,k\in\N$ and $t\ge1$ the coefficient of $a_1^{\beta_1}\cdots a_r^{\beta_r}$ in $a_n^{(t)}(f(1),\dots,f(k))$ is the number of elements in $S_{k+2}^r$ such that all permutations have the same ordered cycle structure up to $k+1-n$, each one with  $k+2-n$ cycles and if the $j$-th cycle is not trivial then $k+1-n\equiv j$ mod $t$; moreover, the $i$-th permutation has exactly $\beta_i+1$ big numbers.
\end{proposition}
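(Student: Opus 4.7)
The plan is to mimic the inductive structure of the proof of Proposition~\ref{P:IntA}, but to track the exponent of each $a_i$ via the ``big number'' statistic. I induct on $n$. In the base case $n=0$, the only admissible $r$-tuple is the identity in every coordinate, whose linear cycle reading $1,2,\dots,k+2$ has exactly one big number (namely $k+2$); this forces $\beta_i+1=1$ for every $i$, which is the only monomial appearing in $a_0^{(t)}=1$. When $k<n$ both sides vanish for the same reason as in Proposition~\ref{P:IntA}.

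For the inductive step with $n\ge 1$ and $k\ge n$, fix an admissible $r$-tuple and let $h+1$ denote the common length of the first cycles. If $k\not\equiv n\pmod{t}$, the divisibility condition forces $h=0$, so each $\pi_i$ starts with the trivial cycle $(1)$. Since $1$ is never a big number when further elements exist, removing this cycle and order-preservingly relabeling $\{2,\dots,k+2\}\to\{1,\dots,k+1\}$ preserves all big-number counts; the reduced $r$-tuples satisfy the hypotheses with parameters $(n,k-1)$, and the inductive hypothesis combined with the recurrence $a_n^{(t)}(x_1,\dots,x_k)=a_n^{(t)}(x_1,\dots,x_{k-1})$ closes this case. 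If instead $k\equiv n\pmod{t}$, we sum over $h\ge 0$: removing first cycles of length $h+1$ and relabeling yields a reduced $r$-tuple in $S_{(k-h-1)+2}^r$ satisfying the hypotheses with parameters $(n-h,k-h-1)$, whose $i$-th big-number count $\beta'_i+1$ satisfies $\beta_i+1=b_i+(\beta'_i+1)$, where $b_i$ counts the big numbers contributed by the first cycle of $\pi_i$ (the remaining big numbers are preserved under the order-preserving relabeling of the rest).

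The decisive step, and the main obstacle, is the combinatorial identity asserting that, for each $h$ and $i$, the number of sequences $(m_1,\dots,m_h)$ of distinct elements of $\{2,\dots,k+2\}$ whose first cycle contributes exactly $b_i$ big numbers to $\pi_i$ equals $[a_i^{b_i}]\,(k+a_i)(k-1+a_i)\cdots(k-h+1+a_i)$. To prove it, observe that $m_j$ is big in $\pi_i$ iff $m_j>M'$ and $m_j$ is a right-to-left maximum among the $m_{j'}$ with $m_{j'}>M'$, where $M':=\max(\{2,\dots,k+2\}\setminus T)$ and $T:=\{m_1,\dots,m_h\}$. Stratifying by $c:=|\{j:m_j>M'\}|$ and invoking the classical bijection between permutations of $[c]$ with $b_i$ right-to-left maxima and those with $b_i$ cycles, one checks the identity term by term. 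Multiplying over $i$, summing over $h$, the $b_i$, and the $\beta'_i$ with $b_i+\beta'_i=\beta_i$, and applying the recurrence $a_n^{(t)}(x_1,\dots,x_k)=\sum_{h\ge 0} x_k x_{k-1}\cdots x_{k-h+1}\,a_{n-h}^{(t)}(x_1,\dots,x_{k-h-1})$ valid for $k\equiv n\pmod{t}$ yields the desired coefficient of $a_1^{\beta_1}\cdots a_r^{\beta_r}$ in $a_n^{(t)}(f(1),\dots,f(k))$.
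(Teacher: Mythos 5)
Your proof is correct and follows essentially the same route as the paper, which disposes of this proposition with the one-line remark that the argument of Proposition~\ref{P:IntA} carries over once each big number is read as a contribution of the indeterminate $a_i$. The generating-function identity you isolate and prove via right-to-left maxima and unsigned Stirling numbers of the first kind --- that the first cycles of length $h+1$, weighted by $a_i^{b_i}$ according to the $b_i$ big numbers they contribute, are enumerated by $(k+a_i)(k-1+a_i)\cdots(k-h+1+a_i)$ --- is exactly the content the paper leaves implicit, so you have simply made the intended argument precise.
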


The proof is the same as the one in Proposition \ref{P:IntA}. In the $i$-th permutation each big number give us a contribution of the indeterminate $a_i$.

In the case of the Jacobi-Stirling numbers, we consider the polynomial $f(x)=(x+a_1)(x+z)$ and consider only the monomial without $a_1$ (it is equivalent to set $a_1=0$). Then we have exactly one big number in the first permutation of $S_{k+2}^2$ as in Proposition \ref{P:IntACoeff}. This number is $k+2$ and it is necessarily in a trivial cycle. Therefore we can omit it and by applying (\ref{E:JS}) we have the following result.
\begin{corollary}
\label{C:NewIntJScoef}
For all $n,j,b\in\N$ the coefficient of $z^b$ in $Jc(n,j)$ is the number of elements in $S_{n}\times S_{n+1}$ such that both permutations have the same ordered cycle structure up to $j$, with respectively $j$ and $j+1$ cycles and the second permutation has exactly $k+1$ big numbers.
\end{corollary}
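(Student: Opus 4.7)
The plan is to deduce this corollary directly from Proposition \ref{P:IntACoeff} via the identification (\ref{E:JS}), followed by a short bijective argument that collapses a redundant part of the first permutation.

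First I would use (\ref{E:JS}) to write $Jc_n^{(j)}(z)=E_{j,n}^{x(x+z)}=e_{n-j}(f(1),\dots,f(n-1))$, where $f(x)=(x+a_1)(x+a_2)$ with $a_1=0$ and $a_2=z$. Extracting the coefficient of $z^b$ then amounts to extracting the coefficient of $a_1^{0}a_2^{b}$. Since $e_m=a_m^{(1)}$, I would apply Proposition \ref{P:IntACoeff} with $r=2$, $t=1$, $\beta_1=0$, $\beta_2=b$, $k=n-1$, and with $n-j$ playing the role of the ``$n$'' in that proposition. The congruence condition is vacuous when $t=1$, so the coefficient equals the number of pairs $(\pi_1,\pi_2)\in S_{n+1}^{2}$ for which both permutations share the same ordered cycle structure up to $j$, each has exactly $j+1$ cycles, $\pi_1$ has exactly one big number, and $\pi_2$ has exactly $b+1$ big numbers.

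The second step is to reduce this model to one in which the first permutation lies in $S_n$ with $j$ cycles. Two observations combine: (i) because $\pi_1$ has $j+1$ cycles and ``same ordered cycle structure up to $j$'' forces every cycle past the $j$-th to be a singleton, the last cycle of $\pi_1$ is trivial; and (ii) the maximum $n+1$ is always a big number, so ``exactly one big number'' is equivalent to $n+1$ being the very last letter of the cycle expansion of $\pi_1$, since otherwise the genuinely last letter would supply a second big number distinct from $n+1$. Together (i) and (ii) force the last cycle of $\pi_1$ to be precisely the fixed point $(n+1)$.

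Finally, the map sending $\pi_1$ to $\tau_1\in S_n$ obtained by deleting this trailing cycle $(n+1)$ is a bijection onto permutations of $S_n$ with exactly $j$ cycles, and it preserves the lengths of the first $j$ cycles, hence preserves the ``same ordered cycle structure up to $j$'' condition when paired with $\pi_2$. This yields the stated bijection with pairs $(\tau_1,\pi_2)\in S_n\times S_{n+1}$. The main obstacle I anticipate is in step (ii): one must argue cleanly why a single big number forces $n+1$ into the terminal position of the expansion, and then combine this with the ordered-cycle-structure convention to force triviality of the final cycle—without this careful interplay, the reduction from $S_{n+1}$ to $S_n$ would fail.
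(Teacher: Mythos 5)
Your proposal is correct and follows essentially the same route as the paper, which likewise specializes Proposition \ref{P:IntACoeff} with $t=1$, $r=2$, $\beta_1=0$, $\beta_2=b$, $k=n-1$ and then deletes the forced trailing cycle $(n+1)$ of the first permutation. You merely make explicit the step the paper states without argument --- that ``exactly one big number'' together with the ordered-cycle-structure convention forces $n+1$ to be the final singleton cycle --- and your justification of that step is sound.
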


For example, if $n=3,j=2$, $Jc(3,2)=5+3z$. The elements of $S_3\times S_4$ whose second permutation has one big number are $(12)(3),(12)(3)(4)$; $(12)(3),(13)(2)(4)$;  $(13)(2),(12)(3)(4)$; $(13)(2),(13)(2)(4)$;  $(1)(23),(1)(23)(4)$; the other $3$ elements are $(1)(23),(1)(24)(3)$; $(12)(3),(14)(2)(3)$; $(13)(2),(14)(2)(3)$.

\subsection{Another interpretation of $h_n(f(1),\dots,f(k))$}
\label{SS:Anoth}
In the previous section we have a combinatorial interpretation of the Jacobi-Stirling numbers of the first kind, but not of the second kind. In this subsection we want to use an idea similar to the ordered cycle structure to obtain another combinatorial interpretation for the Jacobi-Stirling numbers of the second kind and, more  generally, of $h_n(f(1),\dots,f(k))$, with $f$ as usual.

Let we denote with $\overline S_n^h$ be the set of ordered sequences of $h$ elements, where the $i$-th element is an ordered finite sequence (maybe empty) of integers  in $\{i,i+1,\dots,n\}$. Given an element $s\in\overline S_n^h$ we say that $s$  has dimension $k$ if the sum of the cardinality of all the $h$ sequences is $k$.
For example, in $\overline S_4^3$ $(3,1,4),(4,4,3,4),()$ is an element of dimension $7$.

As done for the permutations, we say that two elements in $\overline S_m^h$ and $\overline S_n^h$ have the same \emph{ordered structure} if the cardinalities of the sequences of both elements are equal.
For example $(3,1,2),(4,4),(3)$ and $(1,2,1),(3,3),(3)$ have the same ordered structure.

\begin{proposition}
\label{P:AnInt} 
Let $r\in\PPP$, $a_1,\dots,a_r\in\N$ and $f(x)=(x+a_1)\cdots (x+a_r)$. Then for all $n,k\in\N$ $h_n(f(1),\dots, f(k))$ is the number of elements in $\overline S^k_{k+a_1}\times \cdots\times\overline S^k_{k+a_r}$ with the same ordered structure and dimension $n$.
\end{proposition}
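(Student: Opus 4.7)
The plan is to compute both sides of the claimed equality directly and match them term by term, using the monomial expansion of $h_n$. I fix a common ordered structure, i.e., a tuple $(\ell_1,\ldots,\ell_k)\in\N^k$ in which $\ell_i$ records the length of the sequence placed in the $i$-th slot; the dimension-$n$ condition becomes $\ell_1+\cdots+\ell_k=n$, and by the same-ordered-structure hypothesis this single tuple controls every factor of the Cartesian product simultaneously.

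Given the structure, I enumerate the contributions of a single factor $\overline{S}^{\,k}_{k+a_s}$: the $i$-th slot is an arbitrary sequence of length $\ell_i$ over the $(k+a_s-i+1)$-element alphabet $\{i,i+1,\ldots,k+a_s\}$, so there are $\prod_{i=1}^k (k+a_s-i+1)^{\ell_i}$ such elements. Multiplying over $s=1,\ldots,r$ and collapsing the inner product via $f(m)=\prod_s (m+a_s)$, the number of $r$-tuples with structure $(\ell_1,\ldots,\ell_k)$ is
$$\prod_{i=1}^k\prod_{s=1}^r (k+a_s-i+1)^{\ell_i}=\prod_{i=1}^k f(k-i+1)^{\ell_i}.$$

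Finally I reindex by $j:=k-i+1$ and set $m_j:=\ell_{k-j+1}$; the constraint $\sum_i\ell_i=n$ becomes $\sum_j m_j=n$ and the product rewrites as $\prod_{j=1}^k f(j)^{m_j}$. Summing over all admissible structures yields
$$\sum_{m_1+\cdots+m_k=n}\prod_{j=1}^k f(j)^{m_j},$$
which is precisely the monomial expansion of $h_n(f(1),\ldots,f(k))$, proving the identity.

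There is essentially no obstacle here; the argument reduces to independent position-by-position counting in each slot followed by a reindexing. The only mild subtlety is the order-reversal between the slot labels of $\overline{S}^{\,k}_{k+a_s}$ and the arguments of $f$, handled by the substitution $j=k-i+1$. An inductive proof via the recursion $h_n(x_1,\ldots,x_k)=h_n(x_1,\ldots,x_{k-1})+x_k h_{n-1}(x_1,\ldots,x_k)$ of (\ref{E:Rich}) is also available, splitting $r$-tuples according to whether the first slot is empty (with a value-shift bijection used to identify the empty case with a smaller instance), but the direct enumeration above is cleaner and avoids that bijection entirely.
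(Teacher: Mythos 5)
Your proof is correct, and it takes a more direct route than the paper's. The paper argues by induction on $n$: it peels off the first sequence of each component (whose common length $c$ contributes a factor $\prod_{s=1}^{r}(k+a_s)^{c}=f(k)^{c}$), shifts all remaining entries down by one so as to land in $\overline S^{k-1}_{k-1+a_1}\times\cdots\times\overline S^{k-1}_{k-1+a_r}$, and invokes the identity $h_n(x_1,\dots,x_k)=\sum_{c\ge 0}x_k^{c}\,h_{n-c}(x_1,\dots,x_{k-1})$. You instead fix the entire ordered structure $(\ell_1,\dots,\ell_k)$ at once, count each slot independently (there are $(k+a_s-i+1)^{\ell_i}$ choices in the $s$-th factor, since repetitions are allowed in the sequences), collapse the product over $s$ to $f(k-i+1)^{\ell_i}$, and recognize the sum over all structures with $\sum_i\ell_i=n$ as the monomial expansion $h_n(y_1,\dots,y_k)=\sum_{m_1+\cdots+m_k=n}\prod_j y_j^{m_j}$. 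This is in effect the paper's induction unrolled: what your version buys is the elimination of both the induction and the value-shift bijection, at the cost of appealing to the explicit monomial expansion of $h_n$ rather than to a recurrence. Both arguments hinge on the same two points, namely the independent slot-by-slot count and the order reversal $i\mapsto k-i+1$ between the slot labels and the arguments of $f$, and you handle both correctly.
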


\begin{proof}
If $n=0$ then we have only the empty sequence, indeed $h_0(f(1),\dots,f(k))=1$. If $k=0$ then $\overline S^0_n$ has the trivial element only if $n=0$. Indeed, $h_n(0)=0$ for $n\ge1$. Now let $n\ge 1$ and suppose that the thesis  true for smaller values of $n$. Fix an element $s=(s_i)_{i\le k}\in\overline S^k_{k+a_1}\times \cdots\times\overline S^k_{k+a_r}$. For all $i\le k$ the cardinality of the first sequence in $s_i$ is the same for all $i$ and it ranges between $0$ and $n$: we call $c$ this number. Now each number in the first sequence in $s_i$ is in $\{1,\dots, k+a_i\}$. We delete  the first sequence in each  $s_i$ and we decrease all numbers by one. We obtain elements in $\overline S^{k-1}_{k-1+a_1}\times \cdots\times\overline S^{k-1}_{k-1+a_r}$ with dimension $n-c$. By induction there are  $ h_{n-c}(f(1),\dots,f(k-1))$ of them. In formula, the cardinality of $\overline S_{k+a_1}\times \cdots\overline S_{k+a_r}$ is
\begin{align*}
\sum_{c=0}^{n}h_{n-c}(f(1),\dots,f(k-1))\prod_{i=1}^r (k+a_i)^{c} =&\sum_{c=0}^{n}h_{n-c}(f(1),\dots,f(k-1))f(k)^c\\=&h_n(f(1),\dots,f(k)). 
\end{align*}
Thus the proof is completed.
\end{proof}

By (\ref{E:JS}) we have
\begin{corollary}
\label{C:NewJS2}
Let $n,j,z\in\N$. Then $JS(n,j)$ is the number of elements in $\overline S^j_{j}\times \overline S^j_{j+z}$ with the same ordered structure and dimension $n-j$.
\end{corollary}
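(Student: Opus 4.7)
The corollary follows directly from Proposition \ref{P:AnInt} applied to the specific polynomial that produces the Jacobi-Stirling numbers of the second kind. The plan is essentially a two-line specialization, but I would spell out the indices carefully so that the matching of parameters is unambiguous.

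First, I would recall from (\ref{E:JS}) the identity $JS_n^{(j)}(z) = H_{j,n}^{x(x+z)}$, and then unfold the definition (\ref{D:Hf}) to write
\[
JS_n^{(j)}(z) = H_{j,n}^{x(x+z)} = h_{n-j}\bigl(f(1), f(2), \ldots, f(j)\bigr),
\]
where $f(x) = x(x+z)$. The key observation is that $f$ has the factorization $f(x) = (x+a_1)(x+a_2)$ with $r=2$, $a_1 = 0$ and $a_2 = z$, which are both nonnegative integers (for integer $z \ge 0$), so $f$ lies in the class of polynomials to which Proposition \ref{P:AnInt} applies.

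Next, I would invoke Proposition \ref{P:AnInt} with $r=2$, $a_1=0$, $a_2=z$, with the role of the variable $n$ in that proposition played by $n-j$, and the role of $k$ played by $j$. This yields that $h_{n-j}(f(1), \ldots, f(j))$ equals the number of elements of
\[
\overline{S}^{j}_{j+a_1} \times \overline{S}^{j}_{j+a_2} \;=\; \overline{S}^{j}_{j} \times \overline{S}^{j}_{j+z}
\]
with the same ordered structure and dimension $n-j$. Combining this with the identity displayed above gives exactly the statement of the corollary.

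There is no real obstacle here; the whole content was already worked out in Proposition \ref{P:AnInt}, and the only thing to check is the translation of parameters. The one small point worth being explicit about is that, in the definition of $\overline{S}^{k}_{m}$, the upper index $k$ comes from the number of variables in $h_{n-j}(f(1),\ldots,f(j))$, hence equals $j$, while the lower index $m$ is $j + a_i$ and therefore equals $j$ for the first factor and $j+z$ for the second. With these identifications the proof is complete.
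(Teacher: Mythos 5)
Your proof is correct and is exactly the paper's argument: the corollary is stated there as an immediate specialization of Proposition \ref{P:AnInt} via (\ref{E:JS}), with $r=2$, $a_1=0$, $a_2=z$, the proposition's $k$ equal to $j$, and its $n$ equal to $n-j$. Your careful bookkeeping of the indices of $\overline{S}^{k}_{m}$ matches the intended reading precisely.
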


We now give an example when $z=1$. We know that $LS(3,2)=8$. The elements in $\overline S^2_{2}\times \overline S^2_{3}$ as in Corollary \ref{C:NewJS2} are the following: $(1)(),(1)()$; $(1)(),(2)()$; $(1)(),(3)()$; $(2)(),(1)()$; $(2)(),(2)()$; $(2)(),(3)()$; $()(2),()(2)$; $()(2),()(3)$.

If we consider $h_n(f(1),\dots,f(k)) $ as polynomial in  $a_1,\dots,a_r$ we have the following result.
\begin{proposition}
\label{P:AnIntCoeff} 
Let $r\in\PPP$, $\beta_1,\dots,\beta_r\in\N$ and $f(x)=(x+a_1)\cdots (x+a_r)$. Then for all $n,k\in\N$ the coefficient of $a_1^{\beta_1}\cdots a_r^{\beta_r}$ in $h_n(f(1),\dots, f(k))$ is the number of elements  $s=(s_i)_{i\le r}\in(\overline S_{k+1}^k)^r$ with the same ordered structure, dimension $n$ and such that $k+1$ appears $\beta_i$ times in all sequences of $s_i$ for all $i\le r$.
\end{proposition}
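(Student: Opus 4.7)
The plan is to deduce Proposition \ref{P:AnIntCoeff} from Proposition \ref{P:AnInt} by isolating the coefficient of $a_1^{\beta_1}\cdots a_r^{\beta_r}$ in a factored form of the count. First I would observe that, once the common ordered structure is fixed, the count in Proposition \ref{P:AnInt} factors across the index $i$. For a sequence of lengths $(c_1,\dots,c_k)$ with $\sum_{j=1}^k c_j = n$, the number of $s_i\in\overline{S}^k_{k+a_i}$ with that ordered structure is exactly $\prod_{j=1}^k (k+a_i-j+1)^{c_j}$, because each of the $c_j$ positions of the $j$-th sub-sequence is filled independently by a value in $\{j,\dots,k+a_i\}$. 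Hence
\[
h_n(f(1),\dots,f(k)) \;=\; \sum_{\substack{(c_1,\dots,c_k)\\ \sum c_j = n}} \prod_{i=1}^{r}\prod_{j=1}^{k}\bigl((k-j+1)+a_i\bigr)^{c_j}.
\]
Because the $i$-th factor depends only on $a_i$, the coefficient of $a_1^{\beta_1}\cdots a_r^{\beta_r}$ splits as the sum over ordered structures of $\prod_i [a_i^{\beta_i}]\prod_j ((k-j+1)+a_i)^{c_j}$.

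Next I would interpret each single-variable coefficient bijectively. Expanding $((k-j+1)+a_i)^{c_j}$, each of the $c_j$ positions in the $j$-th sub-sequence is declared either \emph{ordinary}, contributing a factor chosen from the $k-j+1$ values $\{j,\dots,k\}$, or \emph{special}, contributing a factor of $a_i$. The coefficient of $a_i^{\beta_i}$ counts precisely those assignments in which $\beta_i$ positions are special and the remaining ones are filled by ordinary values. The bijection with $\overline{S}^k_{k+1}$ is then immediate: read each special position as containing the value $k+1$, and each ordinary position as containing its chosen element of $\{j,\dots,k\}$. This produces an element of $\overline{S}^k_{k+1}$ with the prescribed ordered structure $(c_1,\dots,c_k)$ and with exactly $\beta_i$ occurrences of $k+1$, and the map is manifestly invertible.

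Assembling the factors and summing over all ordered structures $(c_1,\dots,c_k)$ of total length $n$, the coefficient of $a_1^{\beta_1}\cdots a_r^{\beta_r}$ in $h_n(f(1),\dots,f(k))$ counts exactly the $r$-tuples $(s_1,\dots,s_r)\in(\overline{S}^k_{k+1})^r$ with a common ordered structure, dimension $n$, and $\beta_i$ occurrences of $k+1$ in $s_i$ for each $i$. This is the claim. The only real subtlety, and where I would slow down in the write-up, is verifying cleanly that $k+1$ is a legal entry in every sub-sequence of $\overline{S}^k_{k+1}$ (which is clear since $j\le k<k+1$), so that the "special $\leftrightarrow k+1$" translation is well defined in every position; everything else is bookkeeping. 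As a safety check, one could alternatively repeat the induction of Proposition \ref{P:AnInt}, removing the first sub-sequence of each $s_i$ and decreasing labels by one, and verify that the coefficient on both sides satisfies the same recursion—but the factorisation argument above is shorter and parallels the passage from Proposition \ref{P:IntA} to Proposition \ref{P:IntACoeff}.
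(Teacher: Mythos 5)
Your argument is correct, and it takes a slightly different route from the paper's. The paper's proof is (by its own declaration) a rerun of the induction used for Proposition \ref{P:AnInt}: strip the common first sub-sequence of length $c$ from each $s_i$, note that each of its entries lies in $\{1,\dots,k+1\}$ with the value $k+1$ acting as a mark contributing one factor of $a_i$ and the other $k$ values contributing $1$, so that the stripped layer accounts for $\prod_i\binom{c}{\gamma_i}k^{c-\gamma_i}=\prod_i[a_i^{\gamma_i}](k+a_i)^c$ tuples with $\gamma_i$ marks in $s_i$, and then match the recursion $h_n(f(1),\dots,f(k))=\sum_{c\ge0}f(k)^c\,h_{n-c}(f(1),\dots,f(k-1))$ coefficientwise --- exactly the ``safety check'' you mention at the end. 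You instead avoid induction altogether: you extract from Proposition \ref{P:AnInt} the closed product formula $\prod_{j=1}^k\prod_{i=1}^r\bigl((k-j+1)+a_i\bigr)^{c_j}$ for a fixed ordered structure $(c_1,\dots,c_k)$, observe that it factors over $i$ so the coefficient of $a_1^{\beta_1}\cdots a_r^{\beta_r}$ splits into independent single-variable coefficients, and then realize each single-variable coefficient by the explicit correspondence between ``special'' positions and occurrences of $k+1$. Both proofs rest on the same idea (an occurrence of $k+1$ in $s_i$ is bookkeeping for one factor of $a_i$); yours makes the global count of occurrences of $k+1$ across all $k$ sub-sequences of $s_i$, and the legality of $k+1$ as an entry in every sub-sequence, completely explicit, while the paper's reuses the already-established recursion and requires no new identity. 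No gaps.
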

The proof is essentially the same as the one of Proposition \ref{P:AnInt}. In the case of the Jacobi-Stirling numbers of the second kind, this proposition becomes
\begin{corollary}
\label{C:AnIntJS}
Let $n,j,b\in\N$. Then the coefficient of $z^b$ in $JS_n^{(j)}(z)$ is the number of elements in $\overline S_{j}\times \overline S_{j+1}$ with the same ordered structure and dimension $n-j$, such that $j+1$ appears $b$ times.
\end{corollary}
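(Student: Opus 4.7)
The plan is to derive the corollary as a direct specialization of Proposition \ref{P:AnIntCoeff}, in exactly the same manner that Corollary \ref{C:NewJS2} was obtained from Proposition \ref{P:AnInt}.

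First I would invoke the identity (\ref{E:JS}): $JS_n^{(j)}(z) = H_{j,n}^{x(x+z)} = h_{n-j}\bigl(f(1),\ldots,f(j)\bigr)$ where $f(x)=x(x+z)$. I would then view this as the specialization, at $a_1=0$ and $a_2=z$, of the bivariate polynomial $P(a_1,a_2):=h_{n-j}\bigl(g(1),\ldots,g(j)\bigr)$ with $g(x)=(x+a_1)(x+a_2)$. Writing $P(a_1,a_2)=\sum_{\beta_1,\beta_2}c_{\beta_1,\beta_2}\,a_1^{\beta_1}a_2^{\beta_2}$, the substitution $a_1=0,\ a_2=z$ kills every monomial with $\beta_1>0$, so the coefficient of $z^b$ in $JS_n^{(j)}(z)$ equals $c_{0,b}$, the coefficient of $a_1^0 a_2^b$ in $P(a_1,a_2)$.

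Next I would apply Proposition \ref{P:AnIntCoeff} with $r=2$, $k=j$, the parameter $n$ of the proposition replaced by $n-j$, and $(\beta_1,\beta_2)=(0,b)$. This identifies $c_{0,b}$ with the number of pairs $(s_1,s_2)\in (\overline S_{j+1}^{j})^2$ with the same ordered structure, dimension $n-j$, in which $j+1$ appears $0$ times among the sequences of $s_1$ and $b$ times among the sequences of $s_2$.

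The last step is simply to observe that the condition ``$j+1$ does not occur in $s_1$'' means each of the $j$ inner sequences of $s_1$ uses only integers in $\{i,i{+}1,\ldots,j\}$ (for the $i$-th slot), so $s_1$ may be identified with an element of $\overline S_j^{j}$; hence the pair lives in $\overline S_j^{j}\times \overline S_{j+1}^{j}$. The remaining conditions (same ordered structure, dimension $n-j$, $j+1$ appearing exactly $b$ times in $s_2$) are precisely those in the statement. There is no real obstacle here beyond bookkeeping: the content of the corollary is entirely supplied by Proposition \ref{P:AnIntCoeff}, and the only work is the translation between the ``set $a_1=0$'' operation on the generating polynomial and the combinatorial constraint of forbidding the letter $j+1$ in the first component of the pair.
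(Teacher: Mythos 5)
Your proposal is correct and follows exactly the route the paper intends: the corollary is the specialization of Proposition \ref{P:AnIntCoeff} at $r=2$, $k=j$, dimension $n-j$, with $(\beta_1,\beta_2)=(0,b)$, where setting $a_1=0$ corresponds combinatorially to forbidding the letter $j+1$ in the first component, so that it may be identified with an element of $\overline S_j^{\,j}$. The paper states this only as an immediate consequence, and your write-up supplies precisely the bookkeeping it leaves implicit.
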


\subsection{Monomial symmetric functions}
In this last subsection we want to give a simple combinatorial interpretation of the  monomial symmetric functions, evaluated in $f(1),\dots,f(k)$, where $f$ is as usual.

Let $\lambda=(\lambda_1,\dots,\lambda_t) $ be a partition of $n=\vert\lambda\vert$ and let $k$ be an integer, $k\ge t$. Then the monomial symmetric function associated to $\lambda$ in the indeterminates $x_1,\dots,x_k$ is
$$
m_\lambda(x_1,\dots,x_k)=\sum_\sigma x_{\sigma(1)}^{\lambda_1}\cdots x_{\sigma(k)}^{\lambda_k}
$$
where the sum is over the group of permutations $S_k$ modulo  the stabilizer of $\lambda$. We will give a combinatorial interpretation of $m_\lambda(f(1),\dots,f(k))$.  Let ${\bf u}=(u_1,\dots,u_k),{\bf v}=(v_1,\dots,v_k)\in \N^k$ be two sequences of $k$ integers. We say that ${\bf u}\ge {\bf v}$ if $u_i\ge v_i$ for all $i\le k$. Moreover, if $m\in\N$ we say that ${\bf u}\le m$ if $u_i\le m$ for all $i\le k$.

Fix  a positive integer $k$ and a partition $\lambda=(\lambda_1,\dots,\lambda_t) $, with $k\ge t$. Let $n=\vert \lambda\vert$. We define  $S_{\lambda,k}$ as the set of $n$-tuples of elements $\le k$ in increasing order such that $\{m(1),m(2),\dots,m(k)\}=\{\lambda_1,\dots,\lambda_k\}$ as multiset, where $m(i)$ denotes the multiplicity of $i$ and $\lambda_i=0$ for all $i>t$. For example, if $k=3,\lambda=(2,1)$ then $S_{\lambda,k}=\{(1,1,2),(1,1,3), (1,2,2),(1,3,3),(2,2,3),(2,3,3)\}$.
\begin{proposition}
\label{P:Intmonom}
Let $r\in\PPP$, $a_1,\dots,a_r\in\N$ and $f(x)=(x+a_1)\cdots(x+a_r)$. Then $m_{\lambda}(f(1),\dots,f(k))$ is the number of $r+1$ sequences $(s_0,\dots,s_r)$, not necessarily ordered, each one with $n=\vert \lambda\vert$ elements, such that $s_0\in S_{\lambda,k}$ and $s_0\le s_i\le k+a_i$ for all $i\le k$.
\end{proposition}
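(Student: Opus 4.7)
The plan is to reduce the proposition to a symmetric-function identity and then exploit a reversal–complement involution on $S_{\lambda,k}$. First I would rewrite $m_\lambda(f(1),\ldots,f(k))$ using the indexing set $S_{\lambda,k}$ itself: distinct arrangements $\mu=(\mu_1,\ldots,\mu_k)$ of $(\lambda_1,\ldots,\lambda_t,0,\ldots,0)$ are in bijection with elements $s_0\in S_{\lambda,k}$ by demanding that the value $i$ appear $\mu_i$ times in the weakly increasing sequence $s_0$, and under this bijection $\prod_{i=1}^k x_i^{\mu_i}=\prod_{j=1}^n x_{s_0(j)}$. Specialising $x_i\mapsto f(i)$,
\[
m_\lambda(f(1),\ldots,f(k))=\sum_{s_0\in S_{\lambda,k}}\prod_{j=1}^n f(s_0(j))=\sum_{s_0\in S_{\lambda,k}}\prod_{i=1}^r\prod_{j=1}^n(s_0(j)+a_i).
\]

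Next I would enumerate the combinatorial objects of the proposition. For each fixed $s_0\in S_{\lambda,k}$ and each $i\in\{1,\ldots,r\}$, the coordinates of $s_i$ can be chosen independently subject to $s_0(j)\le s_i(j)\le k+a_i$, giving $\prod_{j=1}^n(k+a_i+1-s_0(j))$ choices. Multiplying over $i$ and summing over $s_0$, the number of $(r+1)$-tuples described in the proposition equals $\sum_{s_0}\prod_{i,j}(k+a_i+1-s_0(j))$, so the proposition reduces to the identity
\[
\sum_{s_0\in S_{\lambda,k}}\prod_{i,j}(s_0(j)+a_i)=\sum_{s_0\in S_{\lambda,k}}\prod_{i,j}(k+a_i+1-s_0(j)).
\]

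The key step is to introduce the map $\tau:S_{\lambda,k}\to S_{\lambda,k}$ defined by $\tau(s_0)(j):=k+1-s_0(n+1-j)$, which reverses the sequence and complements each entry modulo $k+1$. I expect the main (mild) obstacle to be verifying that $\tau$ is a well-defined involution on $S_{\lambda,k}$: weak monotonicity and the range $\{1,\ldots,k\}$ are immediate, while the multiplicity of the value $v$ in $s_0$ equals the multiplicity of $k+1-v$ in $\tau(s_0)$, so the multiset $\{m(1),\ldots,m(k)\}$ is merely permuted and membership in $S_{\lambda,k}$ is preserved; involutivity is obvious from the formula. Reindexing the left-hand sum above via $s_0\mapsto\tau(s_0)$ then converts each factor $\tau(s_0)(j)+a_i$ into $k+a_i+1-s_0(j)$, yielding exactly the right-hand sum and completing the proof.
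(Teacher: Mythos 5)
Your proof is correct and follows essentially the same route as the paper: fix $s_0$, count the $\prod_{i,j}(k+a_i+1-s_0(j))$ admissible choices of $(s_1,\dots,s_r)$, and identify the resulting sum over $S_{\lambda,k}$ with $m_\lambda(f(1),\dots,f(k))$. The only difference is that you make explicit, via the reversal--complement involution $\tau$, the step the paper compresses into ``by virtue of the definition of $S_{\lambda,k}$'' when it rewrites the count as $\prod_{j}f(j)^{m(k+1-j,s_0)}$ and sums; this is a welcome clarification rather than a new idea.
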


\begin{proof}
Let $s_0=(j_1,\dots,j_n)\in S_{\lambda,k}$. Then for all $i\le r$ there are $(k+a_i-j_1+1)(k+a_i-j_2+1)\cdots(k+a_i-j_n+1)$ sequences $s_i$ such that $s_0\le s_i\le k+a_i$. Therefore there are 
$$
\prod_{i=1}^r\bigg(\prod_{h=1}^n (k+a_i-j_h+1)\bigg)=\prod_{h=1}^n f(k-j_h+1)=\prod_{j=1}^{k} f(j)^{m(k+1-j,s_0)}
$$
sequences $(s_0,\dots,s_r)$ as in the statement, with $s_0$ fixed ($m(j,s_0)$ denotes the multiplicity of $j$ in $s_0$). Now we sum over $s_0$ and by virtue of definition of $S_{\lambda,k}$ we have that all such sequences are
$$
\sum_{s_0\in S_{\lambda,k}}\prod_{j=1}^{k} f(j)^{m(k+1-j,s_0)}=m_{\lambda}(f(1),\dots, f(k)).
$$
\end{proof}

When $\lambda=(1,1,\dots,1)$ we have that $m_\lambda(x_1,\dots,x_r)=e_{\vert\lambda\vert}(x_1,\dots,x_r)$. Therefore we have the following result.

\begin{corollary}
\label{P:IntmonomJs}
Let $n,j,z\in\N$. Then $Jc_n^{(j)}(z)$ is the number of  sequences $(s_0,s_1,s_2)$, not necessarily  ordered, each one with $n=\vert \lambda\vert$ elements, such that $s_0$ has its elements pairwise distinct and $s_0\le s_1\le k$ and $s_0\le s_2\le k+z$.
\end{corollary}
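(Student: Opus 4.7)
The plan is to derive this as a direct specialization of Proposition \ref{P:Intmonom}. Recall that if $\lambda=(1,1,\dots,1)$ with $m$ ones, then $m_\lambda(x_1,\dots,x_k)=e_m(x_1,\dots,x_k)$, and from the identity (\ref{E:JS}) combined with (\ref{D:Ef}) we have $Jc_n^{(j)}(z)=E_{j,n}^{x(x+z)}=e_{n-j}(f(1),f(2),\dots,f(n-1))$ where $f(x)=x(x+z)$. Thus the right-hand side is exactly a monomial symmetric function of the required form, evaluated at $(f(1),\dots,f(k))$ with $k=n-1$ and with the choice $r=2$, $a_1=0$, $a_2=z$ (so that $(x+a_1)(x+a_2)=x(x+z)$).

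With this setup I would apply Proposition \ref{P:Intmonom} directly. The partition $\lambda=(1,1,\dots,1)$ with $n-j$ ones has every multiplicity equal to $1$, so $S_{\lambda,k}$ is precisely the set of strictly increasing tuples of length $n-j$ with entries in $\{1,\dots,k\}=\{1,\dots,n-1\}$; equivalently, the condition on $s_0\in S_{\lambda,k}$ in the proposition reduces to saying that $s_0$ is a sequence of $n-j$ pairwise distinct numbers bounded above by $k$. The proposition then counts $(r+1)=3$-tuples $(s_0,s_1,s_2)$ of sequences of length $n-j$ such that $s_0\le s_1\le k+a_1=k=n-1$ and $s_0\le s_2\le k+a_2=n-1+z$, which is exactly the statement of the corollary (reading $n$ in the corollary as $|\lambda|=n-j$).

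There is really no obstacle beyond matching symbols: the only thing to check carefully is the claim that for $\lambda=(1^{n-j})$ the set $S_{\lambda,k}$ coincides with the set of strictly increasing length-$n-j$ tuples in $\{1,\dots,k\}$, which is immediate from the definition of $S_{\lambda,k}$ (the prescribed multiset of multiplicities is $\{1,1,\dots,1\}$). Once that identification is made, the corollary follows verbatim from Proposition \ref{P:Intmonom} with no further computation.
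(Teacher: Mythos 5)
Your derivation is correct and is exactly the route the paper takes: the corollary is stated as an immediate specialization of Proposition \ref{P:Intmonom} to $\lambda=(1^{n-j})$ (so that $m_\lambda=e_{n-j}$), with $f(x)=x(x+z)$, $k=n-1$, $r=2$, $a_1=0$, $a_2=z$. Your added care in reading the corollary's sloppy notation (the sequences have $|\lambda|=n-j$ elements and $k=n-1$) is a correct and welcome clarification, not a deviation.
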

If we want to consider $m_\lambda(f(1),\dots,f(k))$ as polynomial in $a_1,\dots, a_r$ then we have the following interpretation.

\begin{proposition}
\label{P:IntmonomCoeff}
Let $r\in\PPP$, $\beta_1,\dots,\beta_r\in\N$ and $f(x)=(x+a_1)\cdots(x+a_r)$. Then the coefficient of $a_1^{\beta_1}\cdots a_r^{\beta_r}$ in $m_{\lambda}(f(1),\dots,f(k))$ is the number of $r+1$ sequences $(s_0,\dots,s_r)$, not necessarily  ordered, each one with $n=\vert \lambda\vert$ elements, such that $s_0\in S_{\lambda,k}$ and $s_0\le s_i\le k+1$ for all $i\le k$, such that $k+1$ appears $\beta_i$ times in $s_i$.
\end{proposition}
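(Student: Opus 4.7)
\begin{pf}[Proposal]
My plan is to revisit the proof of Proposition \ref{P:Intmonom} and extract the coefficient of $a_1^{\beta_1}\cdots a_r^{\beta_r}$ factor by factor, then give a direct combinatorial reading of each such factor. Since Proposition \ref{P:Intmonom} is already established, the work reduces to a bijective reinterpretation of a coefficient, so essentially no new recursion is needed.

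First, I would recall from the proof of Proposition \ref{P:Intmonom} that, for a fixed $s_0=(j_1,\dots,j_n)\in S_{\lambda,k}$, the number of length-$n$ sequences $s_i$ with $s_0\le s_i\le k+a_i$ is exactly
\[
\prod_{h=1}^{n}\bigl((k-j_h+1)+a_i\bigr),
\]
so that
\[
m_\lambda(f(1),\dots,f(k))=\sum_{s_0\in S_{\lambda,k}}\prod_{i=1}^{r}\prod_{h=1}^{n}\bigl((k-j_h+1)+a_i\bigr).
\]
Because the $a_i$'s occur in separate factors, the coefficient of $a_1^{\beta_1}\cdots a_r^{\beta_r}$ is
\[
\sum_{s_0\in S_{\lambda,k}}\prod_{i=1}^{r}\Bigl[a_i^{\beta_i}\Bigr]\prod_{h=1}^{n}\bigl((k-j_h+1)+a_i\bigr).
\]

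Next, I would read off each inner coefficient combinatorially. Expanding $\prod_{h=1}^{n}\bigl((k-j_h+1)+a_i\bigr)$ by choosing, in each factor, whether to take the constant term or the $a_i$ term, the coefficient of $a_i^{\beta_i}$ is
\[
\sum_{\substack{T\subseteq\{1,\dots,n\}\\ |T|=\beta_i}}\ \prod_{h\notin T}(k-j_h+1).
\]
The summand indexed by $T$ counts exactly the sequences $s_i$ of length $n$ with $s_i(h)\in\{j_h,j_h+1,\dots,k,k+1\}$ such that $s_i(h)=k+1$ precisely when $h\in T$: the positions outside $T$ each contribute $k-j_h+1$ choices in $\{j_h,\dots,k\}$, while the $\beta_i$ positions of $T$ are forced to the value $k+1$. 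Summing over $T$ gives the count of length-$n$ sequences $s_i$ with $s_0\le s_i\le k+1$ in which $k+1$ appears exactly $\beta_i$ times, which is the condition in the statement.

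Finally, summing over $s_0\in S_{\lambda,k}$ and over the independent choices of $s_1,\dots,s_r$ yields the desired count of $(r+1)$-tuples $(s_0,s_1,\dots,s_r)$. I do not expect a real obstacle here: the only point requiring care is the bookkeeping that the substitution $s_i(h)\in\{j_h,\dots,k,k+1\}$ with exactly $\beta_i$ entries equal to $k+1$ matches the algebraic coefficient, which is the factor-by-factor expansion described above. Everything else is a direct specialization of the argument of Proposition \ref{P:Intmonom}.
\end{pf}
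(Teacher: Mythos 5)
Your proposal is correct and follows essentially the route the paper intends: the paper omits an explicit proof for this proposition, relying on the same refinement of Proposition \ref{P:Intmonom} that you carry out, namely expanding each factor $(k-j_h+1)+a_i$ and identifying the positions contributing $a_i$ with the entries of $s_i$ equal to $k+1$. Your factor-by-factor bookkeeping of the coefficient of $a_i^{\beta_i}$ is exactly the intended argument, just written out in full.
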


\begin{corollary}
\label{P:IntmonomJsCoeff}
Let $n,j,b\in\N$. Then the coefficient of $z^b$ in $Jc_n^{(j)}(z)$ is the number of  sequences $(s_0,s_1,s_2)$, not necessarily  ordered, each one with $n=\vert \lambda\vert$ elements, such that $s_0$ has its elements pairwise distinct and $s_0\le s_1\le k$ and $s_0\le s_2\le k+1$, such that $k+1$ appears $b$ times.
\end{corollary}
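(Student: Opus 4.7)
The plan is to obtain this corollary as a direct specialization of Proposition \ref{P:IntmonomCoeff}, so no fresh combinatorial construction is required; essentially one only needs to match notation carefully.

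First I would rewrite $Jc_n^{(j)}(z)$ in the form to which Proposition \ref{P:IntmonomCoeff} applies. By (\ref{E:JS}) and the definition (\ref{D:Ef}),
\[
Jc_n^{(j)}(z) \;=\; E^{x(x+z)}_{j,n} \;=\; e_{n-j}\bigl(f(1),\dots,f(n-1)\bigr),
\]
where $f(x)=x(x+z)=(x+a_1)(x+a_2)$ with $a_1=0$ and $a_2=z$; in particular we are in the case $r=2$ and $k=n-1$.

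Next I would use the elementary identification $e_{m}=m_{(1^{m})}$, so that
\[
e_{n-j}\bigl(f(1),\dots,f(k)\bigr) \;=\; m_{\lambda}\bigl(f(1),\dots,f(k)\bigr) \qquad\text{with } \lambda=(1^{n-j}),\ |\lambda|=n-j.
\]
Extracting the coefficient of $z^{b}$ in $Jc_n^{(j)}(z)$ is thus equivalent, after the substitution $a_1=0$, $a_2=z$, to extracting the coefficient of $a_1^{\beta_1}a_2^{\beta_2}$ with $\beta_1=0$ and $\beta_2=b$ in $m_{\lambda}(f(1),\dots,f(k))$.

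Finally I would translate the conclusion of Proposition \ref{P:IntmonomCoeff} into the language of the statement. For $\lambda=(1^{n-j})$ the set $S_{\lambda,k}$ consists exactly of the increasing $|\lambda|$-tuples with entries in $\{1,\dots,k\}$ whose multiplicity sequence is $(1,1,\dots,1)$, i.e.\ the entries of $s_0$ are pairwise distinct. The condition $\beta_1=0$ forces $k+1$ not to appear in $s_1$, which is the same as $s_1\le k$; the condition $\beta_2=b$ forces $k+1$ to appear exactly $b$ times in $s_2$, and the remaining entries of $s_2$ lie between $s_0$ and $k$, which together is the bound $s_0\le s_2\le k+1$ of the statement. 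This matches the description in the corollary verbatim.

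The proof is therefore essentially clerical; there is no substantive obstacle, only the bookkeeping of the three substitutions $r=2$, $a_1=0$, $a_2=z$ and $\lambda=(1^{n-j})$, together with the observation that killing the variable $a_1$ amounts to forbidding the value $k+1$ from $s_1$. If anything, the subtle point to check is that the general upper bounds $s_i\le k+a_i$ of Proposition \ref{P:IntmonomCoeff} do reduce, under the specialisation, to $s_1\le k$ and $s_2\le k+1$ as written.
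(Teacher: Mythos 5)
Your proposal is correct and follows exactly the route the paper intends: the corollary is the specialization of Proposition \ref{P:IntmonomCoeff} with $r=2$, $a_1=0$, $a_2=z$, $k=n-1$ and $\lambda=(1^{n-j})$, using $e_{n-j}=m_{(1^{n-j})}$ and reading off the coefficient of $a_1^{0}a_2^{b}$. One tiny slip in your closing remark: the bounds $s_i\le k+a_i$ belong to Proposition \ref{P:Intmonom}, whereas Proposition \ref{P:IntmonomCoeff} already uses the uniform bound $s_i\le k+1$ with the multiplicity conditions, so your earlier derivation (that $\beta_1=0$ forces $s_1\le k$) is the right and complete justification.
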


\section{Final remarks}
\label{S:Rem}
In this section we recall properties of symmetric functions and apply them to the special symmetric functions $H_{j,n}^f$ and $E_{j,n}^f$. It is well known (see e.g. \cite[Chapter I.2]{Macdonald1979}) that the generating functions of the elementary and complete symmetric functions are respectively
\begin{align*}
\sum_{r=0}^{n}e_r(x_1,\dots,x_n)t^r&=\prod_{i=1}^{n}(1+x_it) \\
\sum_{r=0}^{\infty} h_r(x_1,\dots,x_n) t^r&=\prod_{i=1}^{n}\frac{1}{1-x_i t}
\end{align*}
Therefore, the generating functions of $E_{j,n}^f$ and $H_{j,n}^f$ are respectively
\begin{align*}
\sum_{j=0}^n E_{j,n}^f t^{n-j}=\prod_{i=1}^{n-1}(1+f(i) t) \\
\sum_{n=j}^\infty H_{j,n}^f t^{n-j}=\prod_{i=1}^j\frac{1}{1-f(i)t}
\end{align*}
or, equivalently,
\begin{align}
\sum_{j=0}^n E_{j,n}^f t^{j}=t\prod_{i=1}^{n-1}(t+f(i)) \label{E:GFEbis}\\
\sum_{n=j}^\infty H_{j,n}^f t^{n}=\prod_{i=1}^j\frac{t}{1-f(i)t}.\label{E:GFHbis}
\end{align}

We have that the matrices $H^f=(H_{j,n}^f)_{j,n\in\N}$ and $E^f=((-1)^{j+n}E_{j,n}^f)_{j,n\in \N}$ are  inverses of each other. In fact,  for fixed $j,n'\in \N$
\begin{align}
\frac{\prod_{i=1}^{n'-1}(1-f(i)t)}{\prod_{l=1}^{j}(1-f(l)t)}&=\bigg(\sum_{n=0}^\infty H_{j,n}^f t^{n-j} \bigg)\bigg(\sum_{j'=0}^{n'} E_{j',n'}^f (-t)^{n'-j'} \bigg) \label{E:prodHE1}\\
&=\sum_{n=0}^\infty\sum_{j'=0}^{n'} (-1)^{j'+n'}H_{j,n}^f E_{j',n'}^f t^{n-j+n'-j'}. \label{E:prodHE2}
\end{align}
When we extract the coefficient of $t^{n'-j}$, by (\ref{E:prodHE2}), we get the entry $j,n'$ of the product $H^fE^f$ . If $n'<j $ then  $t^{n'-j}$ has coefficient $0$ in the RHS of (\ref{E:prodHE1}); if $n'= j$  the LHS of (\ref{E:prodHE1}) is $\frac{1}{1-f(j)t} $ and therefore the coefficient of $t^0$ is $1$; if $n'>j$ the LHS of (\ref{E:prodHE1}) is a polynomial of degree $n'-j-1$ and therefore the coefficient of $t^{n'-j}$ is $0$. So the product $H^fE^f$ is the (infinite) identity matrix.

Now, for all $j\ge 1$, set $\langle x \rangle_j:=x(x-f(1))(x-f(2))\cdots (x-f(j-1))$ and set $\langle x \rangle_0:=1$. By previous remark and by (\ref{E:GFEbis}) we have
\begin{align}
x^n&=\sum_{j=0}^n H_{j,n}^f \langle x \rangle_j\label{E:bH}\\
\langle x \rangle_n&=\sum_{j=0}^n E_{j,n}^f x^j.\label{E:bE}
\end{align}

Finally, since $f$ has all real and nonpositive zeros, $f$ is injective if it is evaluated on $\R^+$. Thus, it is possible to use the Newton interpolation formula 
\begin{equation*}
\label{E:Nif}
x^n=\sum_{j=0}^n\Bigg(\sum_{r=0}^{j}\frac{x_r^n}{\displaystyle\prod_{k=0, k\ne r}^j(x_r-x_k)}\Bigg)\displaystyle\prod_{i=0}^{j-1}(x-x_i)
\end{equation*} 
in (\ref{E:bH}) to obtain the following expression for the $H_{j,n}^f$ when $n\ge 1$ (we set $x_0=0$):
\begin{equation*}
\label{E:expl}
H_{j,n}^f=\sum_{r=1}^{j}\frac{f(r)^{n-1}}{\displaystyle\prod_{k=1, k\ne r}^j(f(r)-f(k))}.
\end{equation*}

\end{document}